\newtheorem{theorem}{Theorem}[section]
\newtheorem{lemma}[theorem]{Lemma}
\newtheorem{proposition}[theorem]{Proposition}
\theoremstyle{definition}
\newtheorem{definition}[theorem]{Definition}
\newtheorem{example}[theorem]{Example}
\newtheorem{remark}[theorem]{Remark}
\DeclareMathOperator{\sinc}{sinc}
\DeclareMathOperator\Span{span}
\DeclareMathOperator\tr{tr}
\numberwithin{equation}{section}
\def\h{{\mathcal{H}}}
\def\R{{\mathbb{R}^{n}}}
\def\l{{L^2(\mathbb{R}^{2n})}}
\def\L{{L^{2}(\mathbb{T}^{n}\times \mathbb{T}^{n}\times \mathbb{R}^{n})}}
\def\Ll{{L^{2}(\mathbb{T}\times \mathbb{T};\left[\p,\p\right])}}
\def\Z{{\mathbb{Z}^n}}
\def\T{{\mathbb{T}^{n}}}
\def\c{{\mathbb{C}}}
\def\d{{\mathcal{D}}}
\def\t{{T_{(k,l)}^t}}
\def\c{{\xi}}
\def\d{{\xi^{'}}}
\def\e{{\eta}}
\def\p{{\phi}}
\newcommand\numberthis{\addtocounter{equation}{1}\tag{\theequation}}
\newcommand{\be}{\begin{equation}}
	\newcommand{\ee}{\end{equation}}
\begin{document}
	\title[Zak transform associated with the Weyl transform]{Zak transform associated with the Weyl transform and the system of twisted translates on $\mathbb{R}^{2n}$}
	
	%    Information for first author
	%\author{S. R. Das}
	%    Address of record for the research reported here
	%\address{Department of Mathematics, Indian Institute of Technology, Madras, India}
	%\email{santiranjandas100@gmail.com}
	%    Current address
	%\curraddr{Department of Mathematics and Statistics,
		%Case Western Reserve University, Cleveland, Ohio 43403}
	%\email{Email}
	%    \thanks will become a 1st page footnote.
	%\thanks{The first author was supported in part by NSF Grant \#000000.}
	%    Information for second author
	\author{Radha Ramakrishnan}
	\address{Department of Mathematics, Indian Institute of Technology, Madras, India}
	\email{radharam@iitm.ac.in}
	%    Information for third author
	\author{Rabeetha Velsamy}
	\address{Department of Mathematics, Indian Institute of Technology, Madras, India}
	\email{rabeethavelsamy@gmail.com}
	
	%\thanks{Support information for the second author.}
	%    General info
	\subjclass{Primary 42C15; Secondary 43A30}
	
	%\date{January 1, 2001 and, in revised form, June 22, 2001.}
	
	%\dedicatory{This paper is dedicated to our advisors.}
	
	\keywords{Frames, Riesz basis, Schauder basis, Weyl transform, Zak transform}
	\begin{abstract}
	We introduce the Zak transform on $L^{2}(\mathbb{R}^{2n})$ associated with the Weyl transform. By making use of this transform, we define a bracket map and prove that the system of twisted translates $\{\t\p : k,l\in \mathbb{Z}^{n}\}$ is a frame sequence iff $0<A\leq \left[\p,\p\right](\xi,\xi^{'})\leq B<\infty,$ for a.e $(\xi,\xi^{'})\in \Omega_{\p},$ where $\Omega_{\p}=\{(\c,\d)\in \T\times\T : \left[\p,\p\right](\xi,\xi^{'})\neq 0\}$. We also prove a similar result for the system $\{\t\p : k,l\in \mathbb{Z}^{n}\}$ to be a Riesz sequence. For a given function belonging to the principal twisted shift-invariant space $V^{t}(\p)$, we find a necessary and sufficient condition for the existence of a canonical biorthogonal function. Further, we obtain a characterization for the system $\{\t\p : k,l\in\mathbb{Z}\}$ to be a Schauder basis for $V^{t}(\p)$ in terms of a Muckenhoupt $\mathcal{A}_{2}$ weight function.
	\end{abstract}
	\maketitle
	\section{Introduction}
	A closed subspace $V\subset L^2(\mathbb{R})$ is called a shift-invariant space if $f\in V\implies T_k f \in V$
	for any $k \in \mathbb{Z}$ where $T_k$ denotes the translation operator $T_kf(y)=f (y-k)$. The spaces appearing in the definition of multiresolution analysis in order to construct wavelets are popular examples of shift-invariant spaces. We refer to \cite{Mallat1989}, \cite{Meyer1987} for further details.\\
	
	Bownik  in \cite{bownik}, characterized shift-invariant spaces on $\R$ in terms of range functions and obtained characterization for a system of translates to form a frame sequence and a Riesz sequence. Later shift-invariant spaces were studied on locally compact abelian groups in \cite{bownikr, cabrelli, kamyabi} and on a non-abelian compact group in \cite{radhas}. Currey et al, in \cite{currey} characterized shift-invariant system in terms of range function for SI/Z type groups. There are several interesting characterization theorems for frames and Riesz sequences in connection with shift-invariant space on various types of Heisenberg group such as polarized Heisenberg group, standard Heisenberg group, A.Weil's abstract Heisenberg group. We refer to \cite{Arati, Mayeli, RHG, das} in this connection.\\
	
	When $V$ is taken to be the principal shift-invariant space $V(\p)=\overline{\Span\{T_{k}\phi : k\in \mathbb{Z}\}}$, for $\p\in L^{2}(\mathbb{R})$, a characterization of the system of translates to form a frame sequence or a Riesz sequence is given in terms of the weight function $w_\phi(\xi)=\sum_{k\in \mathbb{Z}}|\widehat{\phi}(\xi+k)|^2$,\hspace{3mm}$\xi\in [0,1]$. More precisely, the system $\overline{\Span\{T_{k}\phi : k\in \mathbb{Z}\}}$ is a frame sequence with bounds $m,M>0$ iff $m\leq w_\phi(\xi)\leq M$, a.e $\xi\in [0,1]\setminus N$, where $N=\{\xi\in [0,1]:w_\phi(\xi)=0\}$ and it is a Riesz sequence iff $m\leq w_\phi(\xi)\leq M$, a.e $\xi\in [0,1]$. We refer to \cite{CO} for the details. On the otherhand, it is well known that the Gabor system $\{T_{k}M_{l}g : k,l\in \mathbb{Z}\}$ forms a frame for $L^{2}(\mathbb{R})$ iff there exist $A,B>0$ such that $A\leq |Zg(x,\xi)|^{2}\leq B$ a.e $x,\xi\in \mathbb{T}$, where $g\in L^{2}(\mathbb{R})$. Here $Zg$ denotes the Zak transform of $g$ and $M_{l}$ denotes the modulation operator $M_{l}g(y)=e^{2\pi ily}g(y)$. (See \cite{Heilbook}).\\ 
	
	Further, it is well known that the system $\{T_k\phi:k\in \mathbb{Z}\}$ is a Schauder basis for $\overline{\Span\{T_{k}\phi : k\in \mathbb{Z}\}}$ if and only if $w_\phi$ satisfies the Muckenhoupt $\mathcal{A}_2$ condition, where $w_\phi(\xi)=\sum_{k\in \mathbb{Z}}|\widehat{\phi}(\xi+k)|^2$,\hspace{3mm}$\xi\in [0,1]$. We refer \cite{wheeden,Sikic_2007} in this connection.\\
	
	Another interesting problem in the context of shift-invariant spaces is the notion of dual integrability which was introduced in \cite{cyclic} in a more general setting of a locally compact abelian group. Let $G$ be a locally compact abelian group and $d\alpha$ denote Haar measure on the dual group $\widehat{G}.$ A unitary representation $\Pi$ of $G$ on a Hilbert space $\mathcal{H}$ is said to be dual integrable if there exists a function, called the bracket,
	\begin{equation*}
		\left[\cdot,\cdot\right]_{\Pi} : \mathcal{H}\times\mathcal{H}\rightarrow L^{1}(\widehat{G},d\alpha)
	\end{equation*}
	such that 
	\begin{equation}\label{dualdef1}
		\langle \p,\Pi(g)\psi\rangle_{\mathcal{H}}=\int_{\widehat{G}}\left[\p,\psi\right]_{\Pi}(\alpha)\overline{\alpha(g)}\,d\alpha,
	\end{equation}
	for all $\p,\psi\in \mathcal{H}$ and all $g\in G$. Let $\Pi$ be a unitary representation on a separable Hilbert space $\mathcal{H}$. We say $\Pi$ admits a Helson map if there exists a $\sigma-$ finite measure space $(\mathcal{M},\nu)$ and a linear isometry 
	\begin{equation*}
		\mathcal{J} : \mathcal{H}\rightarrow L^{2}((\mathcal{M},\nu), L^{2}(\widehat{G}))
	\end{equation*}
	satisfying 
	\begin{equation}\label{helson1}
		\mathcal{J}[\Pi(g)\p](x)(\alpha)=\alpha(g)\mathcal{J}[\p](x)(\alpha)
	\end{equation}
	for all $g\in G,~\p\in \mathcal{H},$ a.e $x\in \mathcal{M},~\alpha\in \widehat{G}.$
	The following result is well known. 
	\begin{theorem}
		Let G be a locally compact abelian group and $\Pi$ a unitary representation of $G$ on the separable Hilbert space $\mathcal{H}$.
		\begin{itemize}
			\item [(a)] If the representation $\Pi$ admits a Helson map $\mathcal{J}$, then $\Pi$ is dual integrable and a bracket map is provided by \[\left[\p,\psi\right]_{\Pi}(\alpha)=\int_{\mathcal{M}}\mathcal{J}[\phi](x)(\alpha)\overline{\mathcal{J}[\psi](x)(\alpha)}\,dx,\hspace{5mm}\text{for all}~\phi,\psi\in \mathcal{H}.\]
			\item [(b)] If the representation $\Pi$ is dual integrable it admits a Helson map.
		\end{itemize}
	\end{theorem}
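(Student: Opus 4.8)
The plan is to establish the two implications separately: part (a) is a direct verification, while part (b) requires decomposing $\mathcal{H}$ into cyclic subspaces. For part (a), I would first check that the candidate bracket lies in $L^{1}(\widehat{G},d\alpha)$. Writing the norm of $L^{2}((\mathcal{M},\nu),L^{2}(\widehat{G}))$ as the iterated integral $\int_{\mathcal{M}}\int_{\widehat{G}}|F(x)(\alpha)|^{2}\,d\alpha\,dx$, two successive applications of the Cauchy--Schwarz inequality (first in $\alpha$, then in $x$) give $\int_{\widehat{G}}|[\phi,\psi]_{\Pi}(\alpha)|\,d\alpha\le\|\mathcal{J}[\phi]\|\,\|\mathcal{J}[\psi]\|=\|\phi\|_{\mathcal{H}}\|\psi\|_{\mathcal{H}}<\infty$; the same bound shows that $(x,\alpha)\mapsto\mathcal{J}[\phi](x)(\alpha)\overline{\mathcal{J}[\psi](x)(\alpha)}$ lies in $L^{1}(\nu\times d\alpha)$, which licenses the Fubini interchange below. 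Since a linear isometry preserves inner products (via polarization), $\langle\phi,\Pi(g)\psi\rangle_{\mathcal{H}}=\langle\mathcal{J}[\phi],\mathcal{J}[\Pi(g)\psi]\rangle$; inserting the intertwining identity $\mathcal{J}[\Pi(g)\psi](x)(\alpha)=\alpha(g)\mathcal{J}[\psi](x)(\alpha)$, pulling the scalar $\overline{\alpha(g)}$ outside, and interchanging the order of integration produces exactly $\int_{\widehat{G}}[\phi,\psi]_{\Pi}(\alpha)\overline{\alpha(g)}\,d\alpha$, which is (\ref{dualdef1}).

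For part (b), the plan is to build the Helson map from cyclic pieces. Fix $\phi\in\mathcal{H}$ and set $\langle\phi\rangle_{\Pi}=\overline{\Span\{\Pi(g)\phi:g\in G\}}$, a $\Pi$-invariant subspace. Using dual integrability, for $f=\sum_{i}c_{i}\Pi(g_{i})\phi$ one computes $\|f\|_{\mathcal{H}}^{2}=\sum_{i,j}c_{i}\overline{c_{j}}\langle\Pi(g_{i})\phi,\Pi(g_{j})\phi\rangle=\int_{\widehat{G}}\bigl|\sum_{i}c_{i}\alpha(g_{i})\bigr|^{2}[\phi,\phi]_{\Pi}(\alpha)\,d\alpha$. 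This identity forces $[\phi,\phi]_{\Pi}\ge0$ a.e., and it shows that the assignment $T_{\phi}\colon\Pi(g)\phi\mapsto\alpha(g)$ extends to a well-defined linear isometry of $\langle\phi\rangle_{\Pi}$ onto a closed subspace of $L^{2}(\widehat{G},[\phi,\phi]_{\Pi}\,d\alpha)$ intertwining $\Pi(g)$ with multiplication by $\alpha(g)$.

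Next, because $\mathcal{H}$ is separable and $\Pi$ is unitary, decompose $\mathcal{H}=\bigoplus_{j<N}\langle\phi_{j}\rangle_{\Pi}$ with $N\in\N\cup\{\infty\}$ into mutually orthogonal cyclic subspaces, and let $P_{j}$ denote the orthogonal projection onto $\langle\phi_{j}\rangle_{\Pi}$, which commutes with every $\Pi(g)$ since both $\langle\phi_{j}\rangle_{\Pi}$ and its complement are $\Pi$-invariant. Finally, take $\mathcal{M}=\{j:j<N\}$ with counting measure $\nu$ and define $\mathcal{J}[\psi](j)(\alpha)=T_{\phi_{j}}(P_{j}\psi)(\alpha)\sqrt{[\phi_{j},\phi_{j}]_{\Pi}(\alpha)}$. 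A change-of-measure computation gives $\int_{\widehat{G}}|\mathcal{J}[\psi](j)(\alpha)|^{2}\,d\alpha=\|P_{j}\psi\|_{\mathcal{H}}^{2}$, so summing over $j$ shows $\mathcal{J}$ is an isometry into $L^{2}((\mathcal{M},\nu),L^{2}(\widehat{G}))$; and $P_{j}\Pi(g)=\Pi(g)P_{j}$ together with $T_{\phi_{j}}(\Pi(g)\chi)=\alpha(g)T_{\phi_{j}}(\chi)$ yields (\ref{helson1}).

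The main obstacle is in part (b): showing that $T_{\phi}$ is genuinely well-defined (equivalently, that $\|\sum_{i}c_{i}\Pi(g_{i})\phi\|_{\mathcal{H}}^{2}$ depends only on the vector $\sum_{i}c_{i}\Pi(g_{i})\phi$, not on its expansion) and that $[\phi,\phi]_{\Pi}\ge0$ a.e. Both reduce to the density of $\Span\{\alpha\mapsto\alpha(g):g\in G\}$ in $L^{2}$ of the finite measure $[\phi,\phi]_{\Pi}\,d\alpha$ on $\widehat{G}$, which follows from standard Stone--Weierstrass/Pontryagin-duality arguments; the orthogonal decomposition of $\mathcal{H}$ into cyclic subspaces is classical for unitary representations but should be stated carefully using separability. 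Everything else — the $L^{1}$ estimate, the Fubini interchanges, and the bookkeeping with the projections $P_{j}$ and the weights $\sqrt{[\phi_{j},\phi_{j}]_{\Pi}}$ — is routine.
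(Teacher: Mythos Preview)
The paper does not give its own proof of this theorem: it is stated in the introduction as background and the authors simply write ``We refer to the recent survey article \cite{tribute} for the above result''. So there is no proof in the paper to compare your proposal against.

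That said, your outline is the standard argument and is essentially correct. Part (a) is exactly the right computation. For part (b), the decomposition into cyclic subspaces together with the weighted isometry $T_{\phi_j}$ is the classical route (this is essentially how it is done in the references the paper cites, e.g.\ \cite{cyclic,tribute}). One small point: you do not actually need Stone--Weierstrass to see that $[\phi,\phi]_{\Pi}\ge 0$ a.e.; this follows directly from Bochner's theorem, since $g\mapsto\langle\Pi(g)\phi,\phi\rangle$ is positive definite and dual integrability identifies $[\phi,\phi]_{\Pi}\,d\alpha$ as its (nonnegative) Bochner measure. The well-definedness of $T_\phi$ then follows immediately from the isometry identity you wrote down, with no further density argument required beyond extending from finite sums by continuity. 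Otherwise the bookkeeping with the projections $P_j$ and the weights $\sqrt{[\phi_j,\phi_j]_{\Pi}}$ is exactly right.
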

	We refer to the recent survey article \cite{tribute} for the above result and several other interesting results including historical developments in connection with bracket map, shift-invariant system, more generally orbits of unitary representations on a locally compact abelian group.\\
	
	In \cite{Iverson}, Iverson studied frames of the form $\{\rho (\xi)f_{i}\}_{\xi\in K,i\in I}$, where $\rho$ is a representation of a non abelian compact group $K$ on a Hilbert space $H_{\rho}$ using operator valued Zak transform. In \cite{Barbieri_ACHA2015, barbieri} Barbieri et al investigated the structure of subspaces of a Hilbert space which are invariant under unitary representations of a discrete group. Here they generalized the concepts of bracket map, fiberization map, dual integrability, non-commutative Zak transform and obtained characterization of frames and Riesz basis in a more general setting using the theory of von Neumann algebra.\\
	
	In this paper, we investigate the above problems for the system of twisted translates $\{\t\p : k,l\in \mathbb{Z}^{n}\}$ on $\mathbb{R}^{2n}$. In \cite{saswata}, Radha and Saswata introduced twisted shift-invariant spaces $V^{t}(\p)$ in $L^{2}(\mathbb{R}^{2n})$ and studied the problem of characterizing the system of twisted translates to form a frame sequence or a Riesz sequence. These characterizations were obtained using certain ``Condition C''. More precisely it was shown that if $\p\in L^{2}(\mathbb{R}^{2n})$ such that $\p$ satisfies ``Condition C'', then $\{\t\p : k,l\in \mathbb{Z}^{n}\}$ is a frame sequence with frame bounds $A,B>0$ iff $A\leq \omega_{\p}(\xi)\leq B$ a.e $\xi\in \Omega_{\p}$, where $\Omega_{\p}=\{\xi\in \T : \omega_{\p}\neq 0\}$ and $\omega_{\p}(\xi)=\sum_{m\in \mathbb{Z}^{n}}\int_{\mathbb{R}^{n}}|K_{\p}(\xi+m,\eta)|^{2}\,d\eta,~\xi\in \mathbb{T}^{n}$. Here $K_{\p}$ denotes the kernel of the Weyl transform of $\p$. A similar result was shown for a Riesz sequence. Recently in \cite{Twisted}, twisted $B-$ splines were introduced and various fundamental properties were studied. It is to be noted that except the first order twisted $B-$ spline, all other splines do not satisfy ``Condition C''. In the paper \cite{Twisted}, for the second order twisted $B-$ spline, it was shown that the corresponding system of twisted translates forms a Riesz sequence by applying numerical computations. These kind of hard computations could not be extended to higher order twisted $B-$ splines. Hence, in this paper, we attempt to obtain characterizations for the system of twisted translates $\{\t\p : k,l\in \mathbb{Z}^{n}\}$ to form a frame sequence and a Riesz sequence without the assumption ``Condition C'' on the generator $\p$.\\
	
	First we introduce the Zak transform associated with the Weyl transform using its kernel. We call this transform, the Weyl-Zak transform. By making use of this transform, we define a bracket map and prove that the system of twisted translates $\{\t\p : k,l\in \mathbb{Z}^{n}\}$ is a frame sequence iff $0<A\leq \left[\p,\p\right](\xi,\xi^{'})\leq B<\infty,$ for a.e $(\xi,\xi^{'})\in \Omega_{\p},$ where $\Omega_{\p}=\{(\c,\d)\in \T\times\T : \left[\p,\p\right](\xi,\xi^{'})\neq 0\}$. We also prove a similar result for the system $\{\t\p : k,l\in \mathbb{Z}^{n}\}$ to be a Riesz sequence. We illustrate this result with an example. We find a necessary and sufficient condition for the existence of the canonical biorthogonal function $\tilde{\p}$ for a given $\p\in V^{t}(\p)$, where $V^{t}(\p)$ denotes the principal twisted shift-invariant space. We also show that if $\p\in L^{2}(\mathbb{R}^{2n})$ and the system $\{\t\p : k,l\in \mathbb{Z}^{n}\}$ forms a Riesz sequence, then there exists a $\tilde{\p}\in V^{t}(\p)$ such that $\{\t\p : k,l\in \mathbb{Z}^{n}\}$ is an orthonormal system for $V^{t}(\p)$. In this case $V^{t}(\tilde{\p})$ coincides with $V^{t}(\p)$. Further, we obtain a characterization for the system $\{\t\p : k,l\in \mathbb{Z}\}$ to be a Schauder basis for $V^{t}(\p)$ in terms of a Muckenhoupt $\mathcal{A}_{2}$ weight function.\\
	
	In the final part of this paper, we look at the notion of dual integrability in connection with twisted translation. First we observe that the function $\Pi : \mathbb{Z}^{n}\times\mathbb{Z}^{n}\rightarrow \mathcal{U}(L^{2}(\mathbb{R}^{2n}))$ defined by $\Pi(k,l)=T^{t}_{(k,l)}$, $k,l\in \mathbb{Z}^{n}$ is only a projective representation. Hence we take the group $H=2\mathbb{Z}^{n}\times\mathbb{Z}^{n}$ and define the map $\Pi_{H} : 2\mathbb{Z}^{n}\times\mathbb{Z}^{n}\rightarrow \mathcal{U}(L^{2}(\mathbb{R}^{2n}))$ by $\Pi_{H}(2k,l)=T^{t}_{(2k,l)}$, $k,l\in \mathbb{Z}^{n}$, which turns out to be a unitary representation of $H$ on $L^{2}(\mathbb{R}^{2n})$. When we try to investigate whether the map $\Pi_{H}$ is dual integrable, we notice that we get an integrable form similar to that of \eqref{dualdef1} but the presence of an additional term in the integrand (See \eqref{bracketpi}). In a similar way, when we want to examine whether $\Pi_{H}$ admits a Helson map, we notice an additional term in comparison with \eqref{helson1} (See \eqref{helsonmap}). We believe that these two observations pave a way for further investigations in future.
	\section{Notation and Background}
	Let $\h\neq 0$ be a separable Hilbert space.
	\begin{definition}
		A sequence $\{f_{k} : k\in\mathbb{Z}\}$ in $\h$ is called a frame for $\h$ if there exist two constants $A,B>0$ such that
		\begin{align*}
			A\|f\|^{2}\leq \sum_{k\in \mathbb{Z}}|\langle f,f_{k}\rangle|^{2} \leq B\|f\|^{2},\hspace{10mm}\forall\ f\in \h.\numberthis \label{frm}
		\end{align*}
	\end{definition}
	If $\{f_{k} : k\in \mathbb{Z}\}$ is a frame for $\overline{\Span\{f_{k} : k\in \mathbb{Z}\}}$, then it is called a frame sequence.
	\begin{definition}
		A sequence $\{f_{k} : k\in \mathbb{Z}\}$ in $\h$ is called a Riesz basis for $\h$ if there exists a bounded invertible operator $U$ on $\h$ and an orthonormal basis $\{e_{k} : k\in \mathbb{Z}\}$ of $\h$ such that $U(e_{k})=f_{k},\ \forall\ k\in\mathbb{Z}$. If $\{f_{k} : k\in \mathbb{Z}\}$ is a Riesz basis for $\overline{\Span\{f_{k} : k\in \mathbb{Z}\}}$ then it is called a Riesz sequence.
	\end{definition}
	\begin{theorem}
		Let $\{f_{k} : k\in \mathbb{Z}\}$ in $\h$ be a sequence in $\mathcal{H}$. Then the following conditions are equivalent.
		\begin{enumerate}[(i)]
			\item $\{f_{k}\}$ is a Riesz basis for $\h$.
			\item $\{f_{k}\}$ is complete in $\h$, and there exist constants $A,B>0$ such that $$A\sum_{k}|c_{k}|^{2}\leq \big\|\sum_{k} c_{k}f_{k}\big\|^{2}\leq B\sum_{k} |c_{k}|^{2},$$ for every finite scalar sequence $\{c_{k}\}$.
		\end{enumerate}
	\end{theorem}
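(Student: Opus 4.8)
The plan is to prove the two implications directly from the definition of a Riesz basis recorded above, keeping track of explicit constants so that the stability bounds fall out of the construction. \emph{For (i) $\Rightarrow$ (ii):} suppose $U$ is a bounded invertible operator on $\h$ and $\{e_{k} : k\in\z\}$ an orthonormal basis of $\h$ with $Ue_{k}=f_{k}$. For a finite scalar sequence $\{c_{k}\}$, Parseval's identity gives $\big\|\sum_{k}c_{k}e_{k}\big\|^{2}=\sum_{k}|c_{k}|^{2}$, whence
\[
\Big\|\sum_{k}c_{k}f_{k}\Big\| = \Big\|U\Big(\sum_{k}c_{k}e_{k}\Big)\Big\| \le \|U\|\Big(\sum_{k}|c_{k}|^{2}\Big)^{1/2},
\]
while, writing $v=U^{-1}Uv$,
\[
\Big(\sum_{k}|c_{k}|^{2}\Big)^{1/2} = \Big\|U^{-1}U\Big(\sum_{k}c_{k}e_{k}\Big)\Big\| \le \|U^{-1}\|\,\Big\|\sum_{k}c_{k}f_{k}\Big\|.
\]
This yields the stated inequalities with $A=\|U^{-1}\|^{-2}$ and $B=\|U\|^{2}$. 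Completeness of $\{f_{k}\}$ follows because $U$ is continuous and onto: $\overline{\Span\{f_{k}\}}=\overline{\Span\{Ue_{k}\}}\supseteq U\big(\overline{\Span\{e_{k}\}}\big)=U(\h)=\h$.

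\emph{For (ii) $\Rightarrow$ (i):} fix any orthonormal basis $\{e_{k} : k\in\z\}$ of the separable Hilbert space $\h$. First I would check that for every $\{c_{k}\}\in\ell^{2}(\z)$ the series $\sum_{k}c_{k}f_{k}$ converges in $\h$: applying the upper bound in (ii) to finite partial sums $\sum_{k\in F}c_{k}f_{k}$ shows these form a Cauchy net, and $\h$ is complete. Then define $U:\h\to\h$ by $U\big(\sum_{k}c_{k}e_{k}\big)=\sum_{k}c_{k}f_{k}$; this is well defined since the expansion of each $x\in\h$ in $\{e_{k}\}$ is unique, it is linear, and passing to the limit in the inequalities of (ii) gives $\sqrt{A}\,\|x\|\le\|Ux\|\le\sqrt{B}\,\|x\|$ for all $x\in\h$. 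In particular $U$ is bounded and $Ue_{k}=f_{k}$. The lower bound forces $U$ to be injective with closed range; since that range contains $\Span\{f_{k}\}$ and is closed, the completeness hypothesis makes it all of $\h$. Hence $U$ is a bounded bijection of $\h$, so by the open mapping theorem $U^{-1}$ is bounded, and $U$ carries the orthonormal basis $\{e_{k}\}$ onto $\{f_{k}\}$; thus $\{f_{k}\}$ is a Riesz basis.

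The only points that are not purely formal are the norm convergence of $\sum_{k}c_{k}f_{k}$ for $\ell^{2}$ coefficients and the surjectivity of $U$, and both are handled by combining the quantitative two-sided bound in (ii) with a completeness statement --- completeness of $\h$ for the first and completeness of the system $\{f_{k}\}$ for the second. I do not expect a serious obstacle here: this is the classical equivalence between describing a Riesz basis through its synthesis operator and describing it through a stability estimate, and the argument is self-contained once the definition of a Riesz basis above is in hand.
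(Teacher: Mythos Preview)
Your proof is correct and complete. Note, however, that the paper does not actually prove this theorem: it is stated in the background section without proof, with a reference to \cite{CN} for details. Your argument is the standard one found in such references --- deriving the two-sided bound from $\|U\|$ and $\|U^{-1}\|$ for (i)$\Rightarrow$(ii), and building the synthesis operator $U$ on an orthonormal basis and invoking closed range plus completeness for (ii)$\Rightarrow$(i) --- so there is nothing to compare against beyond confirming that your write-up matches the classical treatment.
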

For further study of frames and Riesz bases we refer to \cite{CN}.
\begin{definition}\cite{biorthogonal}
	Let $\mathcal{B}=\{\psi_{k} : k\in \mathbb{Z}\}$ be a countable collection of functions in $\mathcal{H}$ such that $\overline{\Span\{\psi_{k} : k\in \mathbb{Z}\}}=\mathcal{H}.$ A collection $\tilde{\mathcal{B}}=\{\tilde{\psi_{k}} : k\in \mathbb{Z}\}$ is biorthogonal to $\mathcal{B}$ if \[\langle \psi_{j}, \tilde{\psi_{k}}\rangle=\delta_{jk}\hspace{5mm}\text{for all}\hspace{5mm}j,k\in \mathbb{Z}.\]
\end{definition}
\begin{definition}\cite{biorthogonal}
	Let $\psi\in \mathcal{H}$ and $\mathcal{B}=\{T_{k}\psi : k\in \mathbb{Z}\},$ where $T_{k}\psi$ is the integer translate of $\psi.$ If there exists $\tilde{\psi}\in \overline{\Span(\mathcal{B})}$ with the property $\langle T_{k}\psi,\tilde{\psi}\rangle=\delta_{k0},$ then the system $\{T_{j}\tilde{\psi} : j\in \mathbb{Z}\}$ is biorthogonal to $\mathcal{B}$. The function $\tilde{\psi}$, will be called the canonical biorthogonal function to $\psi.$
\end{definition}

\begin{definition}
	An ordered collection $\{f_{n}\}_{n\in \mathbb{N}}$ in a Hilbert space $\mathcal{H}$ is a Schauder basis for $\mathcal{H}$ if for each $f\in \mathcal{H}$ there exist unique scalars $c_{n}(f)$ such that 
	\begin{equation}\label{schauder basis}
		f=\sum\limits_{n\in\mathbb{N}}c_{n}(f)f_{n},
	\end{equation}
	where the series converges in the norm of $\mathcal{H}.$
\end{definition}
Given a sequence $\{f_{n}\}_{n\in \mathbb{N}}$ that has a biorthogonal sequence $\{\tilde{f_{n}}\}_{n\in \mathbb{N}}$, we define the partial sum operators $S_{N}:\mathcal{H}\rightarrow\mathcal{H}$ by \[S_{N}(f):=\sum\limits_{n=1}^{N}\langle f, \tilde{f_{n}}\rangle f_{n}.\] 
\begin{theorem}\label{schauderchara}
	Given a collection $\{f_{n}\}_{n\in\mathbb{N}}$ in a Hilbert space $\mathcal{H},$ the following statements are equivalent.
	\begin{itemize}
		\item [(i)] $\{f_{n}\}_{n\in \mathbb{N}}$ is a Schauder basis.
		\item [(ii)] There exists a biorthogonal sequence $\{\tilde{f_{n}}\}_{n\in \mathbb{N}}$ such that the partial sum operators $S_{N}$ converge in the strong operator topology to the identity map, that is, 
		\[f=\sum\limits_{n\in\mathbb{N}}\langle f,\tilde{f_{n}}\rangle f_{n}\hspace{5mm}~\forall~f\in \mathcal{H}.\]
		\item [(iii)] There exists a biorthogonal sequence $\{\tilde{f_{n}}\}_{n\in \mathbb{N}}$ such that the partial sum operators are uniformly bounded in operator norm, that is, $\sup\limits_{N}\|S_{N}\|<\infty.$
	\end{itemize}
\end{theorem}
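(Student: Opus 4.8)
The plan is to establish the cycle of implications $(i)\Rightarrow(ii)\Rightarrow(iii)\Rightarrow(i)$; the only genuinely analytic inputs are the open mapping theorem and the Banach--Steinhaus (uniform boundedness) theorem. Throughout I will use the standing hypothesis, consistent with the notion of Schauder basis and with the rest of the paper, that $\overline{\Span\{f_{n} : n\in\mathbb{N}\}}=\mathcal{H}$, so that $\{f_{n}\}$ is complete.

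For $(i)\Rightarrow(ii)$, assume $\{f_{n}\}$ is a Schauder basis, so every $f\in\mathcal{H}$ has a unique expansion $f=\sum_{n}c_{n}(f)f_{n}$ as in \eqref{schauder basis}, each $c_{n}$ being linear, and set $S_{N}f:=\sum_{n=1}^{N}c_{n}(f)f_{n}$. The first task is to show each $c_{n}$, equivalently each $S_{N}$, is bounded. For this I would introduce the auxiliary norm $\|f\|_{b}:=\sup_{N}\|S_{N}f\|$, verify that it dominates $\|\cdot\|$ (since $S_{N}f\to f$) and that $(\mathcal{H},\|\cdot\|_{b})$ is complete (a routine argument: a $\|\cdot\|_{b}$-Cauchy sequence is $\|\cdot\|$-Cauchy, its coordinate sequences are Cauchy because $\|f_{n}\|\,|c_{n}(f)|=\|S_{n}f-S_{n-1}f\|\le 2\|f\|_{b}$, and uniqueness of expansions lets one identify the limit), and then apply the open mapping theorem to the identity $(\mathcal{H},\|\cdot\|_{b})\to(\mathcal{H},\|\cdot\|)$ to conclude the two norms are equivalent. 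Hence $\sup_{N}\|S_{N}\|<\infty$ and each $c_{n}$ is bounded, so by the Riesz representation theorem $c_{n}(f)=\langle f,\tilde{f}_{n}\rangle$ for a unique $\tilde{f}_{n}\in\mathcal{H}$; this identifies $S_{N}$ with the operator in the statement. Feeding $f=f_{m}$ into its own expansion and invoking uniqueness forces $\langle f_{m},\tilde{f}_{n}\rangle=\delta_{mn}$, so $\{\tilde{f}_{n}\}$ is biorthogonal to $\{f_{n}\}$ and $S_{N}f\to f$ for every $f$, which is $(ii)$.

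The implication $(ii)\Rightarrow(iii)$ is immediate from Banach--Steinhaus: if $S_{N}f\to f$ for each $f$, then $\sup_{N}\|S_{N}f\|<\infty$ pointwise, hence $\sup_{N}\|S_{N}\|<\infty$. For $(iii)\Rightarrow(i)$, write $C:=\sup_{N}\|S_{N}\|$. If $g\in\Span\{f_{n}\}$ lies in the span of $f_{1},\dots,f_{M}$, biorthogonality gives $S_{N}g=g$ for all $N\ge M$. Given $f\in\mathcal{H}$ and $\varepsilon>0$, choose such a $g$ with $\|f-g\|<\varepsilon$ (possible by the density hypothesis); then for all $N\ge M$, $\|S_{N}f-f\|\le C\|f-g\|+\|S_{N}g-g\|+\|g-f\|=C\|f-g\|+\|g-f\|<(C+1)\varepsilon$. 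Thus $f=\sum_{n}\langle f,\tilde{f}_{n}\rangle f_{n}$, and biorthogonality of $\{\tilde{f}_{n}\}$ forces the coefficients in any such expansion to equal $\langle f,\tilde{f}_{n}\rangle$, giving uniqueness; hence $\{f_{n}\}$ is a Schauder basis.

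I expect the main obstacle to be the boundedness of the coefficient functionals, i.e. the uniform boundedness of the partial sum operators, in $(i)\Rightarrow(ii)$: the definition of a Schauder basis supplies only convergence of expansions, with no a priori norm control, so the completeness of $(\mathcal{H},\|\cdot\|_{b})$ together with the open mapping theorem is the crucial non-formal step. Once that is in hand, $(ii)\Rightarrow(iii)$ is Banach--Steinhaus and $(iii)\Rightarrow(i)$ is a soft density argument.
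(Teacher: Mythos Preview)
The paper does not prove this theorem: it is stated in the background section as a known result (with no proof and no reference), so there is no ``paper's own proof'' to compare against. Your argument is the standard one and is correct; in particular, your explicit addition of the completeness hypothesis $\overline{\Span\{f_n\}}=\mathcal{H}$ is necessary for $(iii)\Rightarrow(i)$ and is implicitly assumed in the paper's formulation.
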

\begin{definition}\cite{Heil}
	A non-negative function $w\in L^{1}(\mathbb{T}\times \mathbb{T})$ is in Muckenhoupt $\mathcal{A}_{2}(\mathbb{T}\times \mathbb{T})$ weight if there exists $C>0$ such that for all intervals $I,J\subset \mathbb{R},$ we have \[\bigg(\frac{1}{|I||J|}\int_{J}\int_{I}w(x,y)\,dxdy\bigg)\bigg(\frac{1}{|I||J|}\int_{J}\int_{I}\frac{1}{w(x,y)}\,dxdy\bigg)\leq C.\] 
\end{definition}
The above definition of product $\mathcal{A}_{2}$ weights is equivalent to requiring that $w$ satisfies a uniform $\mathcal{A}_{2}$ condition in each variable. More precisely, $w\in \mathcal{A}_{2}(\mathbb{T}\times \mathbb{T})$ if and only if for almost every $x\in \mathbb{T}$ and for each interval $J\subset \mathbb{R},$ 
\[\bigg(\frac{1}{|J|}\int_{J}w(x,y)\,dy\bigg)\bigg(\frac{1}{|J|}\int_{J}\frac{1}{w(x,y)}\,dy\bigg)\leq C\]
and likewise for the $y$ variable.\\

We now provide the necessary background for the study of Heisenberg group.\\

The Heisenberg group $\mathbb{H}^{n}$ is a nilpotent Lie group whose underlying manifold is $\mathbb{R}^{n}\times\mathbb{R}^{n}\times\mathbb{R}$ with the group operation defined by $(x,y,t)(u,v,s)=(x+u,y+v,t+s+\frac{1}{2}(u\cdot y-v\cdot x))$ and the Haar measure is the Lebesgue measure $dx\,dy\,dt$ on $\mathbb{R}^{n}\times\mathbb{R}^{n}\times\mathbb{R}$. Using the Schr\"{o}dinger representation $\pi_{\lambda},$ $\lambda\in\mathbb{R}^{\ast},$ given by
\begin{equation*}
	\pi_{\lambda}(x,y,t)\p(\xi)=e^{2\pi i\lambda t}e^{2\pi i\lambda(x\cdot\xi+\frac{1}{2}x\cdot y)}\p(\xi+y),~\p\in L^{2}(\mathbb{R}^{n}),
\end{equation*}
we define the group Fourier transform of $f\in L^{1}(\mathbb{H}^{n})$ as
\begin{equation*}
	\widehat{f}(\lambda)=\int_{\mathbb{H}^{n}}f(z,t)\pi_{\lambda}(z,t)\,dzdt,~\text{where}~\lambda\in\mathbb{R}^{\ast},
\end{equation*}
which is a bounded operator on $L^{2}(\mathbb{R}^{n})$. In otherwords, for $\p\in L^{2}(\mathbb{R}^{n})$, we have
\begin{equation*}
	\widehat{f}(\lambda)\p=\int_{\mathbb{H}^{n}}f(z,t)\pi_{\lambda}(z,t)\p\,dzdt,
\end{equation*}
where the integral is a Bochner integral taking values in $L^{2}(\mathbb{R}^{n}).$ If $f$ is also in $L^{2}(\mathbb{H}^{n})$, then $\widehat{f}(\lambda)$ is a Hilbert-Schmidt operator. Define
\begin{equation*}
	f^{\lambda}(z)=\int_{\mathbb{R}}e^{2\pi i\lambda t}f(z,t)\,dt,
\end{equation*}
which is the inverse Fourier transform of $f$ in the $t$ variable. Then we can write
\begin{equation*}
	\widehat{f}(\lambda)=\int_{\mathbb{C}^{n}}f^{\lambda}(z)\pi_{\lambda}(z,0)dz.
\end{equation*}
Let $g\in L^{1}(\mathbb{C}^{n})$. Define
\begin{equation*}
	W_{\lambda}(g)=\int_{\mathbb{C}^{n}}g(z)\pi_{\lambda}(z,0)\,dz.
\end{equation*}
When $\lambda=1$, it is called the Weyl transform of $g$, denoted by $W(g)$. This can be explicitly written as
\begin{equation*}
	W(g)\p(\xi)=\int_{\mathbb{R}^{2n}}g(x,y)e^{2\pi i(x\cdot\xi+\frac{1}{2}x\cdot y)}\p(\xi+y)\,dxdy,~\p\in L^{2}(\mathbb{R}^{n}).
\end{equation*}
The Weyl transform is an integral operator with kernel $K_{g}(\xi,\eta)=\int_{\R}g(x,\eta-\xi)e^{\pi ix\cdot(\xi+\eta)}dx$. If $g\in L^{1}\cap L^{2}(\mathbb{C}^{n})$, then $K_{g}\in L^{2}(\mathbb{R}^{2n})$, which implies that $W(g)$ is a Hilbert-Schmidt operator whose norm is given by $\|W(g)\|^{2}_{\mathcal{B}_{2}}=\|K_{g}\|^{2}_{L^{2}(\mathbb{R}^{2n})}$, where $\mathcal{B}_{2}$ is the Hilbert space of Hilbert-Schmidt operators on $L^{2}(\mathbb{R}^{n})$ with inner product $(T,S)=\tr(TS^{\ast})$. The Plancherel formula and the inversion formula for the Weyl transform are given by $\|W(g)\|^{2}_{\mathcal{B}_{2}}=\|g\|^{2}_{L^{2}(\mathbb{C}^{n})}$ and $g(w)=\tr(\pi(w)^{\ast}W(g))$, $w\in \mathbb{C}^{n}$, respectively. For a detailed study of analysis on the Heisenberg group we refer to \cite{follandphase, thangavelu}.
\begin{definition}\cite{saswata}
	Let $\p \in \l$. For $(k,l)\in \mathbb{Z}^{2n},$ the twisted translation $\t\p$ of $\p$, is defined by
	\begin{align*}
		\t\p(x,y)=e^{\pi i(x\cdot l-y\cdot k)}\p(x-k,y-l),\hspace{5mm}(x,y)\in \mathbb{R}^{2n}.
	\end{align*}
\end{definition}
Using the definition of twisted translation, we have
\begin{align*}
	T_{(k_1,l_1)}^tT_{(k_2,l_2)}^t=e^{-\pi i(k_1\cdot l_2-l_1\cdot k_2)}T_{(k_1+k_2,l_1+l_2)}^t,\ \ \ \ \ \forall\ (k_1,l_1),(k_2,l_2)\in\mathbb{Z}^{2n}.\numberthis  \label{comptsttrns}
\end{align*}
\begin{lemma}\label{twistker}\cite{saswata}
	Let $\p \in \l$. Then the kernel of the Weyl transform of $\t\p$ satisfies
	\begin{align*}
		K_{\t\p}(\xi,\eta)=e^{\pi i(2\xi+l)\cdot k}K_{\p}(\xi+l,\eta).
	\end{align*}
\end{lemma}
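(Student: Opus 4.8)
The plan is to compute $K_{\t\p}$ directly from the integral representation of the kernel of a Weyl transform and to recognize the answer as a modulation of a translate of $K_{\p}$ after a single shift of the integration variable.

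I would first record that, as obtained from $W(g)\p(\xi)=\int_{\mathbb{R}^{2n}}g(x,y)e^{2\pi i(x\cdot\xi+\frac12 x\cdot y)}\p(\xi+y)\,dx\,dy$ by the substitution $y=\eta-\xi$, the kernel of $W(g)$ is
\[
K_{g}(\xi,\eta)=\int_{\R}g(x,\eta-\xi)\,e^{\pi i x\cdot(\xi+\eta)}\,dx .
\]
Taking $g=\t\p$ and inserting the definition $\t\p(x,y)=e^{\pi i(x\cdot l-y\cdot k)}\p(x-k,y-l)$ gives
\[
K_{\t\p}(\xi,\eta)=\int_{\R}e^{\pi i\bigl(x\cdot l-(\eta-\xi)\cdot k\bigr)}\,\p(x-k,\eta-\xi-l)\,e^{\pi i x\cdot(\xi+\eta)}\,dx .
\]

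The one substantive step is the change of variables $x\mapsto x+k$. Under it $\p(x-k,\cdot)$ becomes $\p(x,\cdot)$, the second argument $\eta-\xi-l$ is $\eta-(\xi+l)$, and the product of the three exponentials becomes $e^{\pi i[(x+k)\cdot l-(\eta-\xi)\cdot k+(x+k)\cdot(\xi+\eta)]}$. Splitting off the terms independent of $x$, the constant exponent collapses to
\[
k\cdot l-(\eta-\xi)\cdot k+k\cdot(\xi+\eta)=k\cdot l+2\xi\cdot k=(2\xi+l)\cdot k ,
\]
while the remaining exponent is $\pi i\,x\cdot\bigl((\xi+l)+\eta\bigr)$. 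Hence
\[
K_{\t\p}(\xi,\eta)=e^{\pi i(2\xi+l)\cdot k}\int_{\R}\p\bigl(x,\eta-(\xi+l)\bigr)\,e^{\pi i x\cdot((\xi+l)+\eta)}\,dx=e^{\pi i(2\xi+l)\cdot k}\,K_{\p}(\xi+l,\eta),
\]
which is the asserted identity.

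There is no genuine analytic obstacle: for $\p\in L^{1}\cap L^{2}(\mathbb{C}^{n})$ all the integrals converge absolutely and the change of variables is legitimate, and the general case $\p\in L^{2}(\mathbb{R}^{2n})$ follows by density, since $\t$ is unitary on $L^{2}(\mathbb{R}^{2n})$, the map $\p\mapsto K_{\p}$ is an isometry by the Plancherel formula for the Weyl transform, and both sides of the claimed equality depend continuously on $\p$. The only point requiring care is the bookkeeping of the several phase factors, which is exactly the computation displayed above.
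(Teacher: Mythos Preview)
Your computation is correct. Note, however, that the paper does not actually prove this lemma; it is quoted from \cite{saswata} and stated without proof, so there is no argument in the paper to compare against. Your direct substitution $x\mapsto x+k$ in the kernel formula, together with the phase bookkeeping and the density argument to pass from $L^{1}\cap L^{2}$ to $L^{2}$, is the natural route and is fully valid.
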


\section{Weyl-Zak transform and the bracket map}
%Let $G=\Z\times \Z$ be a locally compact abelian group. From the definition of twisted translates on $\l$, we can note that $T^{t}_{(k,l)}$ is an unitary operator. Let $\Pi_{G} : \Z\times \Z \rightarrow \mathcal{U}(\l)$ be a function defined by \[\Pi_{G}(k,l)f(x,y)=T^{t}_{(k,l)}f(x,y),\hspace{5mm}f\in \l,~x,y\in \mathbb{R}^{n}.\] It is not a representation of $G$ on $\l$, since it satisfies the property \eqref{comptsttrns}.\\
We define the Zak transform $Z_{W}$ associated with the Weyl transform using its kernel as follows.
The map $Z_{W} : \l \rightarrow \L$ is defined by \[Z_{W} \p(\xi,\xi^{'},\eta)=\sum\limits_{m\in \Z}K_{\p}(m+\c,\e)e^{-2\pi im\cdot\d}.\] In fact, for $\p\in \l$, we have
\begin{align*}
	\|Z_{W} \p\|^{2}_{\L}&=\int_{\mathbb{T}^{n}}\int_{\mathbb{T}^{n}}\int_{\mathbb{R}^{n}}|Z_{W} \p(\c,\d,\e)|^{2}\,d\e d\d d\c\\
	&=\int_{\mathbb{T}^{n}}\int_{\mathbb{R}^{n}}\int_{\mathbb{T}^{n}}\bigg|\sum\limits_{m\in \Z}K_{\p}(\c+m,\e)e^{-2\pi im\cdot \d}\bigg|^{2}\,d\d d\e d\c\\
	&=\int_{\mathbb{T}^{n}}\int_{\mathbb{R}^{n}}\int_{\T}\sum\limits_{m\in \Z}K_{\p}(\c+m,\e)e^{-2\pi im\cdot\d}\overline{\sum\limits_{m^{'}\in \Z}K_{\p}(\c+m^{'},\e)e^{-2\pi im^{'}\cdot\d}}\,d\d d\e d\c\\
	&=\int_{\mathbb{T}^{n}}\int_{\mathbb{R}^{n}}\sum\limits_{m,m^{'}\in \Z}K_{\p}(\c+m,\e)\overline{K_{\p}(\c+m^{'},\e)}\int_{\T}e^{-2\pi i(m-m^{'})\cdot \d}\,d\d d\e d\c\\
	&=\int_{\mathbb{T}^{n}}\int_{\mathbb{R}^{n}}\sum\limits_{m\in \Z}K_{\p}(\c+m,\e)\overline{K_{\p}(\c+m,\e)}\,d\e d\c\\
	&=\int_{\mathbb{T}^{n}}\int_{\mathbb{R}^{n}}\sum\limits_{m\in \Z}|K_{\p}(\c+m,\e)|^{2}\,d\e d\c\\
	&=\int_{\mathbb{R}^{n}}\int_{\mathbb{R}^{n}}|K_{\p}(\c,\e)|^{2}\,d\e d\c\\
	&=\|K_{\p}\|^{2}_{\l}\\
	&=\|\p\|^{2}_{\l},
\end{align*}
by using Fubini's theorem. Let $F\in\L.$ For a.e $\c\in\T,$ $\e\in \mathbb{R}^{n},$ we have $F(\c,\cdot,\e)\in L^{2}(\T).$ Then the Fourier series expansion of $F(\c,\cdot,\e)$ is given by \[F(\c,\d,\e)=\sum\limits_{m\in\Z}a^{\c,\e}_{m}e^{2\pi im\cdot \d},~\text{for some}~\{a^{\c,\e}_{m}\}_{m\in\Z}\in\ell^{2}(\Z).\] For each $\xi\in \mathbb{R}^{n},$ there exist unique elements $m\in \Z$ and $\gamma\in \T$ such that $\xi=\gamma+m.$ Define a function $K$ on $\mathbb{R}^{2n}$ by $K(\xi,\eta)=a^{\gamma,\eta}_{m}.$ Now, 
\begin{align*}
	\int_{\mathbb{R}^{n}}\int_{\mathbb{R}^{n}}|K(\c,\e)|^{2}\,d\c\,d\e&=\int_{\mathbb{R}^{n}}\int_{\mathbb{T}^{n}}\sum\limits_{m\in \Z}|K(\gamma+m,\e)|^{2}\,d\gamma\,d\e\\
	&=\int_{\mathbb{R}^{n}}\int_{\mathbb{T}^{n}}\sum\limits_{m\in \Z}|a^{\gamma,\e}_{m}|^{2}\,d\gamma\,d\e\\
	&=\int_{\mathbb{R}^{n}}\int_{\mathbb{T}^{n}}\int_{\mathbb{T}^{n}}|F(\gamma,\d,\e)|^{2}\,d\d\,d\gamma\,d\e<\infty,
\end{align*}
which follows from the Plancherel theorem for the Fourier series. By surjectivity of the Weyl transform there exists $f\in L^{2}(\mathbb{R}^{2n})$ such that $W(f)$ is a Hilbert Schmidt operator with the kernel $K\in L^{2}(\mathbb{R}^{2n}).$ Therefore $Z_{W}$ is an isometric isomorphism between $L^{2}(\mathbb{R}^{2n})$ and $\L.$

\vspace{3mm} 

\hspace{3mm}From, now onwards we call $Z_{W}$, the Weyl-Zak transform.
\begin{example}
	Let $\phi(x,y)=\chi_{[0,1)}(x)\chi_{[0,1)}(y).$ Then,
	the kernel of the Weyl transform of $\phi$ is given by \[K_{\phi}(\c,\e)=e^{\frac{\pi i}{2}(\c+\e)}\sinc\bigg(\frac{\c+\e}{2}\bigg)\chi_{[0,1)}(\e-\c)\]
	(See \cite{saswata}). Then we have 
	\begin{align*}
	Z_{W}\phi(\c,\d,\e)&=\sum\limits_{m\in \Z}K_{\p}(m+\c,\e)e^{-2\pi im\cdot\d}\\
	&=\sum\limits_{m\in \Z}e^{\frac{\pi i}{2}(\c+m+\e)}\sinc\bigg(\frac{\c+m+\e}{2}\bigg)\chi_{[0,1)}(\e-\c-m)e^{-2\pi im\cdot\d}\\
	&=\sum\limits_{m\in \Z}e^{\frac{\pi i}{2}(\c+m+\e)}\sinc\bigg(\frac{\c+m+\e}{2}\bigg)\chi_{[m,m+1)}(\e-\c)e^{-2\pi im\cdot\d}\\
	&=e^{\frac{\pi i}{2}(\c+\lfloor\e-\c\rfloor+\e)}\sinc\bigg(\frac{\c+\lfloor\e-\c\rfloor+\e}{2}\bigg)e^{-2\pi i\lfloor\e-\c\rfloor\cdot\d}.
	\end{align*}
	Hence $Z_{W}\phi(\c,\d,\e)=e^{\frac{\pi i}{2}(\c+\lfloor\e-\c\rfloor+\e)}\sinc\bigg(\frac{\c+\lfloor\e-\c\rfloor+\e}{2}\bigg)e^{-2\pi i\lfloor\e-\c\rfloor\cdot\d}.$
\end{example}
The following proposition gives the image of twisted translates of $\p$ under the Weyl-Zak transform.
\begin{proposition}\label{zaktwistprop}
	Let $\p\in \l$. The Weyl-Zak transform of twisted translates of $\p$ is given by 
	\begin{equation}\label{zaktwist}
		Z_{W} \t \p(\c,\d,\e)=e^{2\pi i(k\cdot \c+l\cdot \d)}e^{\pi ik\cdot l}Z_{W} \p(\c,\d,\e), \hspace{5mm}k,l\in \Z,~\c,\d\in \T,~\e\in \mathbb{R}^{n}.
	\end{equation}
\end{proposition}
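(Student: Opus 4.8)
The plan is to compute directly from the definition of the Weyl-Zak transform, feeding in the kernel identity of Lemma \ref{twistker}. Starting from
\[
Z_{W}\t\p(\c,\d,\e)=\sum_{m\in\Z}K_{\t\p}(m+\c,\e)\,e^{-2\pi im\cdot\d},
\]
I would apply Lemma \ref{twistker} with the first argument $m+\c$ in place of $\c$, which gives $K_{\t\p}(m+\c,\e)=e^{\pi i(2(m+\c)+l)\cdot k}K_{\p}(m+\c+l,\e)$. The first step is then to observe that $e^{2\pi im\cdot k}=1$ because $m,k\in\Z$, so the factor $e^{\pi i\,2m\cdot k}$ disappears and the remaining $k$-dependent phase $e^{\pi i(2\c+l)\cdot k}$ can be pulled out of the sum over $m$.

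The second step is a reindexing of the series: substituting $m'=m+l$ turns $\sum_{m}K_{\p}(m+\c+l,\e)e^{-2\pi im\cdot\d}$ into $e^{2\pi il\cdot\d}\sum_{m'}K_{\p}(m'+\c,\e)e^{-2\pi im'\cdot\d}=e^{2\pi il\cdot\d}Z_{W}\p(\c,\d,\e)$. Combining the two steps yields
\[
Z_{W}\t\p(\c,\d,\e)=e^{\pi i(2\c+l)\cdot k}\,e^{2\pi il\cdot\d}\,Z_{W}\p(\c,\d,\e),
\]
and the final step is the elementary rewriting $e^{\pi i(2\c+l)\cdot k}=e^{2\pi ik\cdot\c}e^{\pi ik\cdot l}$, which gives exactly the claimed formula $e^{2\pi i(k\cdot\c+l\cdot\d)}e^{\pi ik\cdot l}Z_{W}\p(\c,\d,\e)$.

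There is no real obstacle here; the only point requiring a word of care is that the manipulations — pulling the constant phase out of the sum and shifting the summation index — are legitimate because, as established just before this proposition, $Z_{W}\p\in\L$, so the defining series converges in $L^{2}(\T)$ in the $\d$ variable for a.e.\ $(\c,\e)$, and term-by-term reindexing of an $\ell^{2}$ Fourier series is valid. I would phrase the computation as a short display chain and note this convergence justification in passing.
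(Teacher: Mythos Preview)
Your proof is correct and follows essentially the same route as the paper: apply Lemma~\ref{twistker} inside the defining series, use $e^{2\pi im\cdot k}=1$ to extract the constant phase, and then shift the summation index by $l$ to recover $Z_{W}\p$. The paper omits the convergence remark you add at the end, but otherwise the computations are line-for-line the same.
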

\begin{proof}
	By making use of Lemma \refeq{twistker}, we have
	\begin{align*}
		Z_{W}\t\p(\c,\d,\e)&=\sum\limits_{m\in \Z}K_{\t\p}(\c+m,\e)e^{-2\pi im\cdot \d}\\
		&=\sum\limits_{m\in \Z}e^{\pi i(2(\c+m)+l)\cdot k}K_{\p}(\c+m+l,\e)e^{-2\pi im\cdot \d}\\
		&=e^{2\pi ik\cdot \c}e^{\pi ik\cdot l}\sum\limits_{m\in \Z}K_{\p}(\c+m+l,\e)e^{-2\pi im\cdot \d}\\
		&=e^{2\pi ik\cdot \c}e^{\pi ik\cdot l}\sum\limits_{m\in \Z}K_{\p}(\c+m,\e)e^{-2\pi i(m-l)\cdot \d}\\
		&=e^{2\pi i(k\cdot \c+l\cdot \d)}e^{\pi ik\cdot l}\sum\limits_{m\in \Z}K_{\p}(\c+m,\e)e^{-2\pi im\cdot \d}\\
		&=e^{2\pi i(k\cdot \c+l\cdot \d)}e^{\pi ik\cdot l}Z_{W}\p(\c,\d,\e).
	\end{align*}
	This completes the proof.
\end{proof} 

\begin{remark}\label{plancherel theorem}
	In this paper, in most of the places we take the orthonormal basis $\{e^{2\pi i(k\cdot\xi+l\cdot \d)}e^{\pi ik\cdot l} : k,l\in \mathbb{Z}^{n}\}$ instead of the classical Fourier series in $L^{2}(\T\times \T)$. This leads to the Plancherel theorem \begin{equation*}
		\int_{\mathbb{T}^{n}}\int_{\mathbb{T}^{n}}|f(x,y)|^{2}\,dxdy=\sum\limits_{k,l\in \mathbb{Z}^{n}}|\langle f, E_{k,l}\rangle|^{2},
	\end{equation*}
where $E_{k,l}(\c,\d)=e^{2\pi i(k\cdot\xi+l\cdot \d)}e^{\pi ik\cdot l}$.
\end{remark}

Given two functions $\phi, \psi\in L^{2}(\mathbb{R}^{2n}),$ we define the bracket $\left[\p,\psi\right]$ using their Weyl-Zak transform as follows.\[\left[\p,\psi\right](\c,\d)=\int_{\mathbb{R}^{n}}Z_{W} \p(\c,\d,\e)\overline{Z_{W} \psi(\c,\d,\e)}\,d\e.\] The right hand side integral is meaningful because
\begin{align*}\label{bracketwell}
\bigg|\int_{\mathbb{R}^{n}}Z_{W} \p(\c,\d,\e)\overline{Z_{W} \psi(\c,\d,\e)}\,d\e\bigg|&\leq \int_{\mathbb{R}^{n}}|Z_{W} \p(\c,\d,\e)||Z_{W} \psi(\c,\d,\e)\,|d\e\\
&\leq \bigg(\int_{\mathbb{R}^{n}}|Z_{W} \p(\c,\d,\e)|^{2}d\e\bigg)^{\frac{1}{2}}\bigg(\int_{\mathbb{R}^{n}}|Z_{W} \psi(\c,\d,\e)|^{2}d\e\bigg)^{\frac{1}{2}}<\infty,\numberthis
\end{align*}
by applying Cauchy-Schwartz inequality for $Z_{W} \p(\c,\d,\cdot), Z_{W} \psi(\c,\d,\cdot)\in L^{2}(\mathbb{R}^{n}),$ for a.e $\c,\d\in\T$. Clearly $\left[\cdot,\cdot\right]$ is a sesquilinear form and Hermitian symmetric map. Further, \begin{equation*}
\left[\p,\p\right](\c,\d)=\int_{\mathbb{R}^{n}}|Z_{W} \p(\c,\d,\e)|^{2}d\e\geq 0,
\end{equation*}
for a.e $\c,\d\in \mathbb{T}^{n},~\text{for all}~\p\in \l.$ Furthermore, 
\begin{equation*}\label{bracketpoint}
	\big|\left[\phi,\psi\right](\c,\d)\big|\leq \bigg(\big|\left[\phi,\phi\right](\c,\d)\big|\bigg)^{\frac{1}{2}}\bigg(\big|\left[\psi,\psi\right](\c,\d)\big|\bigg)^{\frac{1}{2}},\numberthis
\end{equation*}
for a.e $\c,\d\in \mathbb{T}^{n},~\text{for all}~\p,\psi\in \l,$ by using \eqref{bracketwell}. Now, consider 
\begin{align*}
\|\left[\p,\psi\right]\|_{L^{1}(\mathbb{T}^{n}\times\mathbb{T}^{n})}&= \int_{\mathbb{T}^{n}}\int_{\mathbb{T}^{n}}|\left[\p,\psi\right](\c,\d)|\,d\d d\c\\
&\leq \int_{\mathbb{T}^{n}}\int_{\mathbb{T}^{n}} (\left[\p,\p\right](\c,\d))^{\frac{1}{2}}(\left[\psi,\psi\right](\c,\d))^{\frac{1}{2}}\,d\d d\c\\
& \leq \bigg(\int_{\mathbb{T}^{n}}\int_{\mathbb{T}^{n}} \left[\p,\p\right](\c,\d)\,d\d\,d\c\bigg)^{\frac{1}{2}}\bigg(\int_{\mathbb{T}^{n}}\int_{\mathbb{T}^{n}} \left[\psi,\psi\right](\c,\d)\,d\d d\c\bigg)^{\frac{1}{2}}\\
&=\bigg(\int_{\mathbb{T}^{n}}\int_{\mathbb{T}^{n}} \int_{\mathbb{R}^{n}}|Z_{W}\p(\c,\d,\e)|^{2}\,d\e d\d d\c\bigg)^{\frac{1}{2}}\bigg(\int_{\mathbb{T}^{n}}\int_{\mathbb{T}^{n}}\int_{\mathbb{R}^{n}}|Z_{W}\psi(\c,\d,\e)|^{2}\,d\e d\d d\c\bigg)^{\frac{1}{2}}\\
&=\|Z_{W}\p\|_{\L}\|Z_{W}\psi\|_{\L}\\
&=\|\p\|_{\l}\|\psi\|_{\l},
\end{align*}
by applying Cauchy-Schwartz inequality and using \eqref{bracketpoint}.\\

In the following proposition we observe some interesting properties involving twisted translates of a function in terms of the bracket map. Let $V^{t}(\p)=\overline{\Span\{\t\p : k,l\in\mathbb{Z}^{n}\}},$ denote the principal twisted shift-invaraint space, where $\p\in L^{2}(\mathbb{R}^{2n}).$
\begin{proposition}
	For $k,l\in \mathbb{Z}^{n}$, $\p,\psi\in L^{2}(\mathbb{R}^{2n}),$ we have the following properties.
	\begin{itemize}
		\item [(i)] $\langle\p,T^{t}_{(k,l)}\psi\rangle_{\l}=\int_{\mathbb{T}^{n}}\int_{\mathbb{T}^{n}}\left[\p,\psi\right](\c,\d)e^{-2\pi i(k\cdot \c+l\cdot \d)}e^{-\pi ik\cdot l}d\d d\c$.
		\item [(ii)] $\left[\t\p,\psi\right](\c,\d)=e^{2\pi i(k\cdot\c+l\cdot\d)}e^{-\pi ik\cdot l}\left[\p,\psi\right](\c,\d)$ for a.e $\c,\d\in\mathbb{T}^{n}$.
		\item [(iii)] $\left[\p,\t\psi\right](\c,\d)=e^{-2\pi i(k\cdot\c+l\cdot\d)}e^{-\pi ik\cdot l}\left[\p,\psi\right](\c,\d)$ for a.e $\c,\d\in\mathbb{T}^{n}$.
		\item [(iv)] $\p\perp V^{t}(\psi)~\text{if and only if}~ \left[\p,\psi\right](\c,\d)=0~\text{a.e}~\c,\d\in\mathbb{T}^{n}$.
	\end{itemize}
\end{proposition}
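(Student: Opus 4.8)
The plan is to deduce all four assertions from two facts already established in this section: that the Weyl--Zak transform $Z_{W}$ is an isometric isomorphism of $\l$ onto $\L$, and the intertwining identity of Proposition~\ref{zaktwistprop}, $Z_{W}\t\p(\c,\d,\e)=e^{2\pi i(k\cdot\c+l\cdot\d)}e^{\pi ik\cdot l}Z_{W}\p(\c,\d,\e)$. Throughout I will use that $k\cdot l\in\mathbb{Z}$, so that $e^{\pi ik\cdot l}=e^{-\pi ik\cdot l}=(-1)^{k\cdot l}$; this is exactly what reconciles the factor $e^{-\pi ik\cdot l}$ appearing in (ii) with the factor $e^{+\pi ik\cdot l}$ in Proposition~\ref{zaktwistprop}.

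For (i), I would write $\langle\p,\t\psi\rangle_{\l}=\langle Z_{W}\p,Z_{W}\t\psi\rangle_{\L}$ by the isometry, substitute Proposition~\ref{zaktwistprop}, pull the unimodular scalar $\overline{e^{2\pi i(k\cdot\c+l\cdot\d)}e^{\pi ik\cdot l}}=e^{-2\pi i(k\cdot\c+l\cdot\d)}e^{-\pi ik\cdot l}$ out of the conjugate, and recognize the remaining inner integral over $\e\in\R$ as $\left[\p,\psi\right](\c,\d)$. Interchanging the $\e$-integration with the $(\c,\d)$-integration is legitimate by Fubini, since $\int_{\T}\int_{\T}\int_{\R}|Z_{W}\p||Z_{W}\psi|\,d\e\,d\d\,d\c\le\|Z_{W}\p\|_{\L}\|Z_{W}\psi\|_{\L}<\infty$ by Cauchy--Schwarz. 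Parts (ii) and (iii) are shorter still: insert Proposition~\ref{zaktwistprop} into the defining integral of the bracket --- into the first slot for (ii), into the conjugated second slot for (iii) --- pull the resulting unimodular factor out of the $\e$-integral, and rewrite $e^{\pm\pi ik\cdot l}$ in whichever form the statement uses.

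For (iv), the ``if'' direction is immediate from (i): if $\left[\p,\psi\right]=0$ a.e., then $\langle\p,\t\psi\rangle=0$ for all $k,l\in\Z$, so $\p\perp\Span\{\t\psi:k,l\in\Z\}$ and hence $\p\perp V^{t}(\psi)$. For the ``only if'' direction, (i) identifies $\langle\p,\t\psi\rangle$ with the Fourier coefficient of $\left[\p,\psi\right]\in L^{1}(\T\times\T)$ against the function $E_{k,l}(\c,\d)=e^{2\pi i(k\cdot\c+l\cdot\d)}e^{\pi ik\cdot l}$ from Remark~\ref{plancherel theorem}; so the hypothesis $\p\perp V^{t}(\psi)$ says exactly that all these coefficients of $\left[\p,\psi\right]$ vanish.

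The one point requiring care --- and the only real obstacle --- is that $\left[\p,\psi\right]$ is a priori only in $L^{1}(\T\times\T)$, not $L^{2}$; the computation in the excerpt gives nothing sharper than $\|\left[\p,\psi\right]\|_{L^{1}(\T\times\T)}\le\|\p\|_{\l}\|\psi\|_{\l}$. Hence one cannot conclude $\left[\p,\psi\right]=0$ merely from completeness of $\{E_{k,l}\}$ in $L^{2}$. Instead I would invoke the uniqueness theorem for multiple Fourier series: an $L^{1}(\T\times\T)$ function all of whose Fourier coefficients vanish is zero a.e.\ (the coefficients against $\{E_{k,l}\}$ differ from the classical ones only by the unimodular constants $e^{\pi ik\cdot l}$, so the theorem applies verbatim). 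This yields $\left[\p,\psi\right](\c,\d)=0$ for a.e.\ $(\c,\d)\in\T\times\T$, completing (iv).
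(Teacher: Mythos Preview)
Your proof is correct, and for parts (i) and (iv) it matches the paper's argument essentially line for line; your remark that $\left[\p,\psi\right]$ is only known to lie in $L^{1}(\T\times\T)$ and hence one must appeal to the $L^{1}$ uniqueness theorem (rather than $L^{2}$-completeness of the $E_{k,l}$) is in fact more careful than the paper, which invokes Remark~\ref{plancherel theorem} without further comment.

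For parts (ii) and (iii) you take a genuinely shorter route than the paper. You plug Proposition~\ref{zaktwistprop} directly into the defining $\e$-integral for the bracket and pull the unimodular factor outside; this is a two-line computation. The paper instead proves (ii) by computing $\langle\t\p,T^{t}_{(k',l')}\psi\rangle$ in two different ways---once via the composition law \eqref{comptsttrns} together with part (i), once directly via part (i)---and then compares the resulting Fourier coefficients of $\left[\t\p,\psi\right]$ and $e^{2\pi i(k\cdot\c+l\cdot\d)}e^{-\pi ik\cdot l}\left[\p,\psi\right]$ to conclude they agree a.e. For (iii) the paper starts with your direct substitution but then detours through the identity $\left[\p,\t\psi\right]=\left[T^{t}_{(-k,-l)}\p,\psi\right]$ and applies (ii). Your approach is more economical; the paper's route for (ii) has the minor advantage that it makes explicit how the bracket interacts with the group law \eqref{comptsttrns}, which is conceptually relevant to the dual-integrability discussion later in the paper.
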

\begin{proof} 
	\begin{itemize}
		\item [(i)] Consider 
		\begin{align*}\label{bracket}
			\langle\p,T^{t}_{(k,l)}\psi\rangle_{\l}&=\langle Z_{W}\p,Z_{W} T^{t}_{(k,l)}\psi\rangle_{\L}\\
			&=\int_{\mathbb{T}^{n}}\int_{\mathbb{T}^{n}}\int_{\mathbb{R}^{n}}Z_{W}\p(\c,\d,\e)\overline{Z_{W}\psi(\c,\d,\e)e^{2\pi i(k\cdot \c+l\cdot \d)}e^{\pi ik\cdot l}}\,d\e d\d d\c\\
			&=\int_{\mathbb{T}^{n}}\int_{\mathbb{T}^{n}}\bigg(\int_{\mathbb{R}^{n}}Z_{W}\p(\c,\d,\e)\overline{Z_{W}\psi(\c,\d,\e)}\,d\e\bigg)e^{-2\pi i(k\cdot \c+l\cdot \d)}e^{-\pi ik\cdot l}\,d\d d\c\\
			&=\int_{\mathbb{T}^{n}}\int_{\mathbb{T}^{n}}\left[\p,\psi\right](\c,\d)e^{-2\pi i(k\cdot \c+l\cdot \d)}e^{-\pi ik\cdot l}\,d\d d\c,\numberthis
		\end{align*}
		since $Z_{W}$ is unitary and by making use of \eqref{zaktwist}.
		\item [(ii)]
		Now, by making use of \eqref{comptsttrns} and \eqref{bracket}, we get
		\begin{align*}
			\langle \t\p,T^{t}_{(k^{'},l^{'})}\psi\rangle_{\l}&=\langle \p,T^{t}_{(-k,-l)}T^{t}_{(k^{'},l^{'})}\psi\rangle_{\l}\\
			&=e^{\pi i(k\cdot l^{'}-l\cdot k^{'})}\langle \p,T^{t}_{(k^{'}-k,l^{'}-l)}\psi\rangle_{\l}\\
			&=e^{\pi i(k\cdot l^{'}-l\cdot k^{'})}\\
			&\hspace{2mm}\times\int_{\mathbb{T}^{n}}\int_{\mathbb{T}^{n}}\left[\p,\psi\right](\c,\d)e^{-2\pi i((k^{'}-k)\cdot \c+(l^{'}-l)\cdot \d)}e^{-\pi i(k^{'}-k)\cdot (l^{'}-l)}\,d\d d\c\\
			&=\int_{\mathbb{T}^{n}}\int_{\mathbb{T}^{n}}e^{2\pi i(k\cdot \c+l\cdot \d)}e^{-\pi ik\cdot l}\left[\p,\psi\right](\c,\d)e^{-2\pi i(k^{'}\cdot \c+l^{'}\cdot \d)}e^{-\pi ik^{'}\cdot l^{'}}d\d d\c.
		\end{align*}
		On the other hand,
		\begin{align*}
			\langle \t\p,T^{t}_{(k^{'},l^{'})}\psi\rangle_{\l}&=\int_{\mathbb{T}^{n}}\int_{\mathbb{T}^{n}}\left[\t\p,\psi\right](\c,\d)e^{-2\pi i(k^{'}\cdot \c+l^{'}\cdot \d)}e^{-\pi ik^{'}\cdot l^{'}}d\d d\c,
		\end{align*}
	by using \eqref{bracket}.
		Thus
		\begin{equation}\label{bracketptwist}
			\left[\t\p,\psi\right](\c,\d)=e^{2\pi i(k\cdot \c+l\cdot \d)}e^{-\pi ik\cdot l}\left[\p,\psi\right](\c,\d),~\text{a.e}~\c,\d\in\T,
		\end{equation}
	by making use of Remark \ref{plancherel theorem}, proving $(ii).$
	\item [(iii)]
		Consider
		\begin{align*}\label{v2}
			\left[\p,\t\psi\right](\c,\d)&=\int_{\mathbb{R}^{n}}Z_{W}\p(\c,\d,\e)\overline{Z_{W}\t\psi(\c,\d,\e)}\,d\e\\
			&=\int_{\mathbb{R}^{n}}Z_{W}\p(\c,\d,\e)e^{-2\pi i(k\cdot \c+l\cdot \d)}e^{-\pi ik\cdot l}\overline{Z_{W}\psi(\c,\d,\e)}\,d\e\\
			&=\int_{\mathbb{R}^{n}}Z_{W}\p(\c,\d,\e)e^{-2\pi i(k\cdot \c+l\cdot \d)}e^{\pi ik\cdot l}e^{-2\pi ik\cdot l}\overline{Z_{W}\psi(\c,\d,\e)}\,d\e\\
			&=\int_{\mathbb{R}^{n}}Z_{W}\p(\c,\d,\e)e^{-2\pi i(k\cdot \c+l\cdot \d)}e^{\pi ik\cdot l}\overline{Z_{W}\psi(\c,\d,\e)}\,d\e\\
			&=\int_{\mathbb{R}^{n}}Z_{W} T^{t}_{(-k,-l)}\p(\c,\d,\e)\overline{Z_{W}\psi(\c,\d,\e)}\,d\e\\
			&=\left[T^{t}_{(-k,-l)}\p,\psi\right](\c,\d)\numberthis.
		\end{align*}
		Then the result follows by substituting \eqref{bracketptwist} in \eqref{v2}.
		\item [(iv)] Suppose $\p\perp V^{t}(\psi)$. Then 
		\begin{equation*}
			\langle \p,\t\psi\rangle_{\l}=0~\text{for all}~k,l\in \Z.
		\end{equation*}
		By \eqref{bracket}, it is equivalent to 
		\begin{equation*}
			\int_{\mathbb{T}^{n}}\int_{\mathbb{T}^{n}}\left[\p,\psi\right](\c,\d)e^{-2\pi i(k\cdot \c+l\cdot \d)}e^{-\pi ik\cdot l}d\d d\c=0~\text{for all}~k,l\in \Z,
		\end{equation*}
		which is equivalent to 
		\begin{equation*}
			\left[\p,\psi\right](\c,\d)=0~\text{a.e}~\c,\d\in\T,
		\end{equation*}
	by using Remark \ref{plancherel theorem}, proving (iv).
	\end{itemize}
\end{proof}
\section{A frame sequence and a Riesz sequence of the system of twisted translates}
Let $\p\in\l$, $E^{t}(\p):=\{\t \p : k,l\in\Z\}$ and $U^t(\p)=\Span( E^{t}(\p))$. The following proposition gives the structure of elements in image of $V^{t}(\p)$ under Weyl-Zak transform, for some $\p\in L^{2}(\mathbb{R}^{2n}).$
\begin{proposition}\label{propo}
	Let $\p\in \l$. Then $f\in V^{t}(\p)$ if and only if
	\begin{equation*}
		Z_{W}f(\c,\d,\e)=r(\c,\d)Z_{W}\p(\c,\d,\e),~\text{for a.e}~\c,\d\in \T, \e\in \mathbb{R}^{n},
	\end{equation*}
	for some $r\in L^{2}(\T \times \T;\left[\p,\p\right]).$
\end{proposition}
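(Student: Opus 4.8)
The plan is to characterize $V^{t}(\p)$ through the Weyl-Zak transform by first identifying the image of the dense subspace $U^{t}(\p)=\Span(E^{t}(\p))$ and then passing to the closure. By Proposition \ref{zaktwistprop}, for any finite linear combination $f=\sum_{(k,l)\in F}c_{k,l}\t\p$ we have $Z_{W}f(\c,\d,\e)=\big(\sum_{(k,l)\in F}c_{k,l}e^{2\pi i(k\cdot\c+l\cdot\d)}e^{\pi ik\cdot l}\big)Z_{W}\p(\c,\d,\e)$, so $Z_{W}f=r\cdot Z_{W}\p$ where $r(\c,\d)=\sum_{(k,l)\in F}c_{k,l}E_{k,l}(\c,\d)$ is a trigonometric polynomial in the orthonormal basis of Remark \ref{plancherel theorem}. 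Note that for such $r$, using the definition of the bracket,
\begin{align*}
\|r\|^{2}_{L^{2}(\T\times\T;\left[\p,\p\right])}&=\int_{\T}\int_{\T}|r(\c,\d)|^{2}\left[\p,\p\right](\c,\d)\,d\d d\c\\
&=\int_{\T}\int_{\T}\int_{\mathbb{R}^{n}}|r(\c,\d)Z_{W}\p(\c,\d,\e)|^{2}\,d\e d\d d\c=\|Z_{W}f\|^{2}_{\L}=\|f\|^{2}_{\l},
\end{align*}
since $Z_{W}$ is an isometric isomorphism. Thus the correspondence $f\mapsto r$ from $U^{t}(\p)$ into $L^{2}(\T\times\T;\left[\p,\p\right])$ is isometric (it is well defined because if the combination represents $0$ then $r\,Z_{W}\p=0$, so $\|r\|_{L^{2}(\left[\p,\p\right])}=0$).

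Next I would establish the forward implication for general $f\in V^{t}(\p)$. Pick a sequence $f_{j}\in U^{t}(\p)$ with $f_{j}\to f$ in $\l$; by the isometry above, the associated trigonometric polynomials $r_{j}$ form a Cauchy sequence in $L^{2}(\T\times\T;\left[\p,\p\right])$, hence converge to some $r$ in that space. Since $Z_{W}$ is continuous, $Z_{W}f_{j}\to Z_{W}f$ in $\L$; and since $r_{j}\to r$ in $L^{2}(\left[\p,\p\right])$ means exactly $r_{j}\,Z_{W}\p\to r\,Z_{W}\p$ in $\L$ (again by the bracket computation above applied to $r_j-r$), comparing limits gives $Z_{W}f=r\,Z_{W}\p$ a.e., which is the claimed form. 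For the converse, suppose $Z_{W}f=r\,Z_{W}\p$ for some $r\in L^{2}(\T\times\T;\left[\p,\p\right])$. Approximate $r$ by trigonometric polynomials $r_{j}$ (finite combinations of the $E_{k,l}$) in the norm of $L^{2}(\T\times\T;\left[\p,\p\right])$ — this is possible because the $E_{k,l}$ span a dense subspace of that weighted $L^2$ space. Let $f_{j}\in U^{t}(\p)$ be the corresponding finite combination of twisted translates; then $Z_{W}f_{j}=r_{j}Z_{W}\p\to r\,Z_{W}\p=Z_{W}f$ in $\L$, so $f_{j}\to f$ in $\l$, whence $f\in\overline{U^{t}(\p)}=V^{t}(\p)$.

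The main obstacle I anticipate is the density of trigonometric polynomials in $L^{2}(\T\times\T;\left[\p,\p\right])$, which is needed for the converse direction: the weight $\left[\p,\p\right]$ is only an $L^{1}$ function and may vanish on a set of positive measure, so one cannot simply invoke the standard density of exponentials in unweighted $L^2$. The clean way around this is to observe that the weighted space $L^{2}(\T\times\T;\left[\p,\p\right])$ is isometrically identified, via $r\mapsto r\,Z_{W}\p$, with the closed subspace $Z_{W}(V^{t}(\p))$ of $\L$ (indeed, this map is a surjective isometry onto the closure of $\{r_{j}Z_{W}\p\}$, which is exactly $Z_{W}(U^{t}(\p))$ closed, i.e. $Z_{W}(V^{t}(\p))$); then density of the polynomials $r_j$ is equivalent to density of $U^{t}(\p)$ in $V^{t}(\p)$, which holds by definition. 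Care must also be taken that the map $f\mapsto r$ is well defined on $L^{2}(\T\times\T;\left[\p,\p\right])$ — two representatives $r,r'$ differ by a function that is $0$ in the weighted norm, i.e. $\left[\p,\p\right]$-a.e., which is precisely the equivalence relation defining that space — so the statement should be read with that identification in mind, and I would insert a sentence making this explicit.
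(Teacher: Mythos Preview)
Your approach is exactly the paper's: establish the isometry $f\mapsto r_f$ on finite twisted combinations via Proposition~\ref{zaktwistprop}, compute $\|f\|_{\l}=\|r_f\|_{L^2(\T\times\T;[\p,\p])}$, and extend; the paper in fact just asserts in one line that this extends to an isometric isomorphism $V^t(\p)\cong L^2(\T\times\T;[\p,\p])$, so your explicit forward/converse argument is more detailed than what appears there.

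One caution on the point you yourself flag: your proposed justification for density of the trigonometric polynomials in $L^2(\T\times\T;[\p,\p])$ is circular as written. Saying that $r\mapsto rZ_W\p$ identifies the weighted space with $Z_W(V^t(\p))$ already presupposes that every $rZ_W\p$ lands in $Z_W(V^t(\p))$, which is precisely the converse direction you are trying to prove; the isometry only gives you that the image is closed and \emph{contains} $Z_W(V^t(\p))$, not equality. The clean fix is direct: if $r\in L^2(\T\times\T;[\p,\p])$ is orthogonal to every $E_{k,l}$ in that weighted inner product, then $r\,[\p,\p]\in L^1(\T\times\T)$ has all Fourier coefficients zero, hence $r\,[\p,\p]=0$ a.e., so $|r|^2[\p,\p]=0$ a.e.\ and $r=0$ in the weighted space. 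With that substitution your argument goes through and matches the paper's.
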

\begin{proof}
	Let $f\in U^{t}(\p).$ Then $f=\sum\limits_{(k,l)\in \mathcal{F}}a_{k,l}\t\p,$ where $\mathcal{F}$ is a finite subset of $\mathbb{Z}^{2n}.$ By using \eqref{zaktwist}, we get
	\begin{align*}
		Z_{W} f(\c,\d,\e)&=\sum\limits_{(k,l)\in \mathcal{F}}a_{k,l}Z_{W}\t\p(\c,\d,\e)\\
		&=\sum\limits_{(k,l)\in \mathcal{F}}a_{k,l}e^{2\pi i(k\cdot \c+l\cdot \d)}e^{\pi ik\cdot l}Z_{W}\p(\c,\d,\e)\\
		&=\bigg(\sum\limits_{(k,l)\in \mathcal{F}}a_{k,l}e^{2\pi i(k\cdot \c+l\cdot \d)}e^{\pi ik\cdot l}\bigg)Z_{W}\p(\c,\d,\e)\\
		&=:r_{f}(\c,\d)Z_{W}\p(\c,\d,\e).
	\end{align*}
Further,
	\begin{align*}\label{propiso}
		\|f\|^{2}_{\l}&=\|Z_{W} f\|^{2}_{\L}\\
		&=\int_{\mathbb{T}^{n}}\int_{\mathbb{T}^{n}}|r_{f}(\c,\d)|^{2}\int_{\mathbb{R}^{n}}|Z_{W} \p(\c,\d,\e)|^{2}\,d\e \,d\d\,d\c\\
		&=\int_{\mathbb{T}^{n}}\int_{\mathbb{T}^{n}}|r_{f}(\c,\d)|^{2}\left[\p,\p\right](\c,\d)\,d\d\,d\c\\
		&=\|r_{f}\|^{2}_{L^{2}(\T\times \T;\left[\p,\p\right])}.\numberthis
	\end{align*}
Then the map $f\mapsto r_{f}$ is an isometric isomorphism of $U^{t}(\p)$ onto the collection of all trigonometric polynomials in $L^{2}(\T\times \T;\left[\p,\p\right])$, which can be extended to an isometric isomorphism between $V^{t}(\p)$ and $L^{2}(\T\times \T;\left[\p,\p\right]).$
\end{proof}
\begin{theorem}
	Let $\p\in \l.$ Then $\{\t\p : k,l\in \Z\}$ is a frame sequence with frame bounds $A,B>0$ iff 
	\begin{equation*}\label{frameif}
		0<A\leq \left[\p,\p\right](\c,\d)\leq B<\infty,\hspace{5mm}\text{for a.e}~(\c,\d)\in\Omega_{\p}\numberthis,
	\end{equation*}
	where $\Omega_{\p}:=\{(\c,\d)\in \mathbb{T}^{n}\times\T : \left[\p,\p\right](\c,\d)\neq 0\}.$
\end{theorem}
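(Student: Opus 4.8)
The plan is to transfer the frame-sequence condition on $\{\t\p : k,l\in\Z\}$ in $V^{t}(\p)\subset\l$ through the isometric isomorphism of Proposition \ref{propo} to an equivalent condition on the exponential-type system $\{E_{k,l}\cdot Z_{W}\p\}$ inside $L^{2}(\T\times\T;\left[\p,\p\right])$, and then recognize the latter as the classical statement that the characters $\{E_{k,l}\}$ form a frame for the weighted space $L^{2}(\T\times\T;\left[\p,\p\right])$ precisely when the weight is bounded above and below on its support $\Omega_{\p}$.

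First I would record that, by the isometric isomorphism $f\mapsto r_{f}$ from Proposition \ref{propo} (sending $\t\p$ to the character $E_{k,l}$, up to the phase $e^{\pi ik\cdot l}$ which is already absorbed in $E_{k,l}$ as in Remark \ref{plancherel theorem}), the statement ``$\{\t\p\}$ is a frame sequence with bounds $A,B$'' is equivalent to
\[
A\|r\|^{2}_{L^{2}(\T\times\T;[\p,\p])}\le\sum_{k,l\in\Z}\Big|\langle r,E_{k,l}\rangle_{L^{2}(\T\times\T;[\p,\p])}\Big|^{2}\le B\|r\|^{2}_{L^{2}(\T\times\T;[\p,\p])}
\]
for all $r\in L^{2}(\T\times\T;[\p,\p])$. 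Next, using property (i) of the previous proposition (or directly the definition of the bracket), I would compute $\langle r,E_{k,l}\rangle_{L^{2}(\T\times\T;[\p,\p])}=\int_{\T}\int_{\T}r(\c,\d)[\p,\p](\c,\d)\overline{E_{k,l}(\c,\d)}\,d\d d\c$, i.e.\ the ordinary Fourier coefficient of $r\cdot[\p,\p]\in L^{1}(\T\times\T)$. The Bessel-type sum then becomes, by the Plancherel identity of Remark \ref{plancherel theorem} applied to $r\cdot[\p,\p]\cdot\x_{\Omega_\p}$ (which lies in $L^{2}$ once we know $[\p,\p]$ is essentially bounded on $\Omega_\p$, a point to be handled as part of the argument), the quantity $\int\int |r|^{2}[\p,\p]^{2}$. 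Thus the frame inequalities read
\[
A\int_{\Omega_\p}|r|^{2}[\p,\p]\le\int_{\Omega_\p}|r|^{2}[\p,\p]^{2}\le B\int_{\Omega_\p}|r|^{2}[\p,\p],
\]
valid over all $r$ (note that the restriction to $\Omega_\p$ is automatic since the weight kills the complement). Choosing $r$ to be normalized indicators of small sets inside $\Omega_\p$ and using the Lebesgue differentiation theorem forces $A\le[\p,\p]\le B$ a.e.\ on $\Omega_\p$; conversely, that pointwise sandwich immediately yields the two integral inequalities.

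The one genuinely delicate point — and the step I expect to be the main obstacle — is the interchange of summation and integration that turns the sum of squared Fourier coefficients into $\int|r|^{2}[\p,\p]^{2}$, because a priori $r\cdot[\p,\p]$ is only in $L^{1}$, not $L^{2}$. The clean way around this is to argue in two stages: first prove the \emph{upper} bound direction and the implication ``frame $\Rightarrow$ $[\p,\p]\le B$'' by testing against indicators, which is legitimate since each individual Fourier coefficient is well defined for an $L^{1}$ function; once $[\p,\p]\in L^{\infty}(\Omega_\p)$ is known, $r\cdot[\p,\p]\x_{\Omega_\p}\in L^{2}$ and Plancherel applies rigorously, giving the full equivalence. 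For the lower bound one similarly tests with $r=[\p,\p]^{-1}\x_{S}$ for $S\subset\Omega_\p$ of finite measure (finite because $[\p,\p]\ge A$ there), which is in $L^{2}(\T\times\T;[\p,\p])$, to extract $[\p,\p]\ge A$ a.e.\ on $\Omega_\p$. I would also remark that completeness of $\{\t\p\}$ in $V^{t}(\p)$ is free by definition of $V^{t}(\p)$, so only the frame inequalities need to be addressed, and that the set $\Omega_\p$ is well defined up to null sets since $[\p,\p]$ is determined a.e.
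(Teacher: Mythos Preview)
Your proposal is correct and follows essentially the same route as the paper: transport the frame inequalities through the isometry $f\mapsto r_f$, identify $\sum_{k,l}|\langle f,\t\p\rangle|^{2}$ with $\int_{\T\times\T}|r|^{2}[\p,\p]^{2}$ via Plancherel (Remark~\ref{plancherel theorem}), and then test with indicator functions. The paper tests directly with $r=\chi_{\{[\p,\p]>B\}}$ and $r=\chi_{\{[\p,\p]<A\}\cap\Omega_\p}$, which is simpler than invoking Lebesgue differentiation and avoids the circularity in your lower-bound test function $r=[\p,\p]^{-1}\chi_S$ (whose membership in $L^{2}(\T\times\T;[\p,\p])$ you justify by assuming $[\p,\p]\ge A$, the very thing to be proved); on the other hand, your attention to whether $r\cdot[\p,\p]$ lies in $L^{2}$ before applying Plancherel is a legitimate point that the paper glosses over.
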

\begin{proof}
	Assume that $\{\t\p : k,l\in \Z\}$ is a frame sequence. Then 
	\begin{equation*}\label{framedef}
		A\|f\|^{2}_{\l}\leq \sum\limits_{k,l\in\mathbb{Z}^{n}}|\langle f,\t\p\rangle_{\l}|^{2}\leq B\|f\|^{2}_{\l},\hspace{5mm}\forall~f\in V^{t}(\p)\numberthis.
	\end{equation*}
	Consider,
	\begin{align*}
		\langle f,\t\p\rangle_{\l}&=\langle Z_{W} f,Z_{W}\t\p\rangle_{L^{2}(\T\times \T\times \mathbb{R}^{n})}\\
		&=\int_{\mathbb{T}^{n}}\int_{\mathbb{T}^{n}}\int_{\mathbb{R}^{n}}r(\c,\d)Z_{W} \p(\c,\d,\e)e^{-2\pi i(k\cdot \c+l\cdot \d)}e^{-\pi i k\cdot l}\overline{Z_{W}\p(\c,\d,\e)}\,d\e d\d d\c,
	\end{align*}
by using Proposition \ref{zaktwistprop} and Proposition \ref{propo}. Thus
		\begin{align*}
			\langle f,\t\p\rangle_{\l}=\int_{\mathbb{T}^{n}}\int_{\mathbb{T}^{n}}r(\c,\d)\left[\p,\p\right](\c,\d)e^{-2\pi i(k\cdot \c+l\cdot \d)}e^{-\pi i k\cdot l}\,d\d d\c.
		\end{align*}
Now, by using Remark \ref{plancherel theorem}, we have
	\begin{align*}\label{framemid}
		\sum\limits_{k,l\in\mathbb{Z}^{n}}|\langle f,\t\p\rangle_{\l}|^{2}&=\sum\limits_{k,l\in\mathbb{Z}^{n}}\bigg|\int_{\mathbb{T}^{n}}\int_{\mathbb{T}^{n}}r(\c,\d)\left[\p,\p\right](\c,\d)e^{-2\pi i(k\cdot \c+l\cdot \d)}e^{-\pi k\cdot l}\,d\d d\c\bigg|^{2}\\
		&=\|r\left[\p,\p\right]\|^{2}_{L^{2}(\T\times \T)}\\
		&=\int_{\mathbb{T}^{n}}\int_{\mathbb{T}^{n}}|r(\c,\d)\left[\p,\p\right](\c,\d)|^{2}\,d\d d\c.\numberthis
	\end{align*}
	Substituting \eqref{propiso} and \eqref{framemid} in \eqref{framedef}, we get 
	\begin{equation*}\label{L.H.S}
		A\|r\|^{2}_{L^{2}(\T\times \T;\left[\p,\p\right])}\leq \int_{\mathbb{T}^{n}}\int_{\mathbb{T}^{n}}|r(\c,\d)\left[\p,\p\right](\c,\d)|^{2}\,d\d d\c\numberthis
	\end{equation*}
	and
	\begin{equation*}\label{R.H.S}
		\int_{\mathbb{T}^{n}}\int_{\mathbb{T}^{n}}|r(\c,\d)\left[\p,\p\right](\c,\d)|^{2}\,d\d d\c\leq B\|r\|^{2}_{L^{2}(\T\times \T;\left[\p,\p\right])}\numberthis.
	\end{equation*}
	Let $N:=\{(\c,\d)\in \Omega_{\p} : \left[\p,\p\right](\c,\d)-B>0\}$ be a measurable subset of $\T\times \T$. By choosing $r=\chi_{N}$ in \eqref{R.H.S}, we obtain
	\begin{align*}
		\iint\limits_{N}\left[\p,\p\right](\c,\d)&(\left[\p,\p\right](\c,\d)-B)\,d\d d\c\\
		&=\iint\limits_{\Omega_{\p}}|r(\c,\d)|^{2}\left[\p,\p\right](\c,\d)(\left[\p,\p\right](\c,\d)-B)\,d\d d\c\\
		&\leq 0, 
	\end{align*}
	which implies that $\mu(N)=0,$ where $\mu$ is the Lebesgue measure $d\c\,d\d$ on $\T\times \T.$ Similarly, we can show that $A\leq \left[\p,\p\right](\c,\d)$ for a.e $(\c,\d)\in\Omega_{\p}$, which proves our assertion. Conversely, assume that \eqref{frameif} holds. Then we have
	\begin{equation*} \iint\limits_{\Omega_{\p}}|r(\c,\d)|^{2}\left[\p,\p\right](\c,\d)(A-\left[\p,\p\right](\c,\d))\,d\d\,d\c\leq 0
	\end{equation*}
	and
	\begin{equation*}
		\iint\limits_{\Omega_{\p}}|r(\c,\d)|^{2}\left[\p,\p\right](\c,\d)(\left[\p,\p\right](\c,\d)-B)\,d\d\,d\c\leq 0.
	\end{equation*}
Now retracing the steps back, we get $\{\t\p : k,l\in \mathbb{Z}^{n}\}$ is a frame sequence with frame bounds $A$ and $B$.
\end{proof}
\begin{theorem}\label{riesz}
	Let $\p\in\l$. Then $\{\t \p : k,l\in\Z\}$ is a Riesz sequence for $V^{t}(\p)$ if and only if
	\begin{equation*}\label{rieszequ}
		0<A\leq\left[\p,\p\right](\c,\d)\leq B<\infty~\text{a.e}~\c,\d\in\T\numberthis.
	\end{equation*}
\end{theorem}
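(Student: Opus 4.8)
The plan is to apply the characterization of Riesz bases recalled in Section~2. Since $V^{t}(\p)=\overline{\Span\, E^{t}(\p)}$, the system $E^{t}(\p)=\{\t\p:k,l\in\Z\}$ is automatically complete in $V^{t}(\p)$, so it is a Riesz sequence for $V^{t}(\p)$ exactly when there are constants $A,B>0$ with
\[
A\sum_{k,l\in\Z}|c_{k,l}|^{2}\ \le\ \Big\|\sum_{k,l\in\Z}c_{k,l}\,\t\p\Big\|_{\l}^{2}\ \le\ B\sum_{k,l\in\Z}|c_{k,l}|^{2}
\]
for every finitely supported scalar array $\{c_{k,l}\}$; the job is then to identify these with \eqref{rieszequ}.

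First I would rewrite the middle term. Putting $m(\c,\d)=\sum_{k,l}c_{k,l}E_{k,l}(\c,\d)$ with $E_{k,l}$ the orthonormal basis of Remark~\ref{plancherel theorem}, Proposition~\ref{zaktwistprop} gives $Z_{W}\big(\sum_{k,l}c_{k,l}\t\p\big)=m\cdot Z_{W}\p$, so, using that $Z_{W}$ is an isometry and Fubini's theorem, $\big\|\sum_{k,l}c_{k,l}\t\p\big\|_{\l}^{2}=\int_{\T}\int_{\T}|m(\c,\d)|^{2}\left[\p,\p\right](\c,\d)\,d\d\,d\c$, while Remark~\ref{plancherel theorem} gives $\sum_{k,l}|c_{k,l}|^{2}=\int_{\T}\int_{\T}|m(\c,\d)|^{2}\,d\d\,d\c$. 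As $\{c_{k,l}\}$ runs over finitely supported arrays, $m$ runs over all trigonometric polynomials on $\T\times\T$, which are dense in $L^{2}(\T\times\T)$; hence $\{\t\p:k,l\in\Z\}$ is a Riesz sequence iff there are $A,B>0$ with
\[
A\int_{\T}\int_{\T}|m|^{2}\,d\d\,d\c\ \le\ \int_{\T}\int_{\T}|m|^{2}\left[\p,\p\right]\,d\d\,d\c\ \le\ B\int_{\T}\int_{\T}|m|^{2}\,d\d\,d\c
\]
for all trigonometric polynomials $m$.

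With this in hand the ``if'' direction is immediate from \eqref{rieszequ}. For ``only if'' I would test the two estimates against indicators. Writing $\mu$ for Lebesgue measure on $\T\times\T$, suppose $\mu(N)>0$ for $N=\{(\c,\d):\left[\p,\p\right](\c,\d)>B\}$; choosing trigonometric polynomials $m_{j}\to\chi_{N}$ in $L^{2}(\T\times\T)$, passing to a subsequence that converges $\mu$-a.e., and combining Fatou's lemma with $\int|m_{j}|^{2}\left[\p,\p\right]\le B\int|m_{j}|^{2}\to B\mu(N)$ yields $\int_{N}\left[\p,\p\right]\le B\mu(N)$, contradicting $\left[\p,\p\right]>B$ on $N$; so $\left[\p,\p\right]\le B$ a.e. Once this is known, $\left[\p,\p\right]\in L^{\infty}$, and since also $\left[\p,\p\right]\in L^{1}$ the inclusion $L^{2}(\T\times\T)\hookrightarrow L^{2}(\T\times\T;\left[\p,\p\right])$ is bounded, so both sides of the lower estimate are continuous in $m\in L^{2}(\T\times\T)$ and the estimate extends by density to all $f\in L^{2}(\T\times\T)$; evaluating it at $\chi_{\{[\p,\p]<A\}}$ forces $\mu\{[\p,\p]<A\}=0$, i.e.\ $\left[\p,\p\right]\ge A$ a.e. Together these give \eqref{rieszequ}; in particular $\Omega_{\p}$ must have full measure, which is the feature distinguishing the Riesz case from the frame case treated before.

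The main obstacle will be this passage from the finitely supported test arrays allowed by the Section~2 criterion — equivalently, trigonometric polynomials — to the indicator test functions, since $\left[\p,\p\right]$ is a priori merely integrable, not bounded: the upper (Bessel) bound genuinely needs a limiting argument through Fatou's lemma, and only afterwards does the lower bound reduce to a routine density argument. Everything else — the computation via Proposition~\ref{zaktwistprop}, the structural identification in Proposition~\ref{propo}, and the Plancherel identity of Remark~\ref{plancherel theorem} — runs parallel to the proof of the frame-sequence theorem above.
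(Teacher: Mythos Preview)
Your proposal is correct and follows the same route as the paper: reduce $\big\|\sum c_{k,l}\t\p\big\|^{2}$ via the Weyl--Zak isometry and Proposition~\ref{zaktwistprop} to $\int_{\T\times\T}|m|^{2}[\p,\p]$, identify $\sum|c_{k,l}|^{2}$ with $\int|m|^{2}$ through Remark~\ref{plancherel theorem}, and then test against indicators of the level sets $\{[\p,\p]>B\}$ and $\{[\p,\p]<A\}$.

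The one place you diverge is the passage from finitely supported test arrays to indicator test functions. The paper simply asserts that once $\{\t\p\}$ is a Riesz sequence the inequality \eqref{rif} holds for all $\{a_{k,l}\}\in\ell^{2}(\mathbb{Z}^{2n})$ (hence for all $p\in L^{2}(\T\times\T)$), and then plugs in $p=\chi_{M}$ directly; this relies on the standard fact that the synthesis operator of a Riesz basis is bounded and bounded below on $\ell^{2}$. Your route stays with trigonometric polynomials and reaches indicators through an explicit Fatou/density argument. Both are sound; your version is more self-contained, while the paper's is shorter but leans on an unstated (though well-known) property of Riesz sequences.
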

\begin{proof}
	Let $\{a_{k,l}\}\in \ell^{2}(\mathbb{Z}^{2n}).$ By using \eqref{zaktwist}, we get 
	\begin{align*}\label{reisz1}
		\bigg\|\sum\limits_{k,l\in\mathbb{Z}^{n}}a_{k,l}\t \phi\bigg\|^{2}_{L^{2}(\mathbb{R}^{2n})}&=\bigg\|Z_{W}\bigg(\sum\limits_{k,l\in\mathbb{Z}^{n}}a_{k,l}\t \phi\bigg)\bigg\|^{2}_{\L}\\
		&=\bigg\|\sum\limits_{k,l\in\mathbb{Z}^{n}}a_{k,l}Z_{W}\t \phi\bigg\|^{2}_{\L}\\
		&=\int_{\mathbb{T}^{n}}\int_{\mathbb{T}^{n}}\int_{\mathbb{R}^{n}}\bigg|\sum\limits_{k,l\in\mathbb{Z}^{n}}a_{k,l}Z_{W}\t \phi(\c,\d,\e)\bigg|^{2}d\e d\d d\c\\
		&=\int_{\mathbb{T}^{n}}\int_{\mathbb{T}^{n}}\int_{\mathbb{R}^{n}}\bigg|\sum\limits_{k,l\in\mathbb{Z}^{n}}a_{k,l}e^{2\pi i(k\cdot \c+l\cdot \d)}e^{\pi ik\cdot l}Z_{W}\phi(\c,\d,\e)\bigg|^{2}d\e d\d d\c\\
		&=\int_{\mathbb{T}^{n}}\int_{\mathbb{T}^{n}}\bigg|\sum\limits_{k,l\in\mathbb{Z}^{n}}a_{k,l}e^{2\pi i(k\cdot \c+l\cdot \d)}e^{\pi ik\cdot l}\bigg|^{2}\int_{\mathbb{R}^{n}}\bigg|Z_{W}\phi(\c,\d,\e)\bigg|^{2}d\e d\d d\c\\
		&=\int_{\mathbb{T}^{n}}\int_{\mathbb{T}^{n}}\bigg|\sum\limits_{k,l\in\mathbb{Z}^{n}}a_{k,l}e^{2\pi i(k\cdot \c+l\cdot \d)}e^{\pi ik\cdot l}\bigg|^{2}\left[\p,\p\right](\c,\d)d\d d\c.\numberthis
	\end{align*}
	Let $p\in L^{2}(\T\times \T),$ then 
	\begin{equation*}\label{reisz2}
		p(\c,\d)=\sum\limits_{k,l\in \Z}a_{k,l}e^{2\pi i(k\cdot \c+l\cdot \d)}e^{\pi ik\cdot l}\hspace{5mm}\text{and}\numberthis
	\end{equation*}
	\begin{equation}\label{reisz21}
		\|p\|^{2}_{L^{2}(\T\times \T)}=\sum\limits_{k,l\in\mathbb{Z}^{n}}|a_{k,l}|^{2},
	\end{equation}
	by Remark \ref{plancherel theorem}. Then it follows from \eqref{reisz1} that 
	\begin{equation*}
		\bigg\|\sum\limits_{k,l\in\mathbb{Z}^{n}}a_{k,l}\t \phi\bigg\|^{2}_{L^{2}(\mathbb{R}^{2n})}=\int_{\mathbb{T}^{n}}\int_{\mathbb{T}^{n}}|p(\c,\d)|^{2}\left[\p,\p\right](\c,\d)d\d d\c.
	\end{equation*}
	Assume that $\{\t\p : k,l\in\mathbb{Z}^{n}\}$ is a Riesz sequence. Then there exists $A,B>0$ such that  
	\begin{equation*}\label{rif}
		A\sum\limits_{k,l\in\mathbb{Z}^{n}}|a_{k,l}|^{2}\leq \bigg\|\sum\limits_{k,l\in\mathbb{Z}^{n}}a_{k,l}\t \phi\bigg\|^{2}_{L^{2}(\mathbb{R}^{2n})} \leq B\sum\limits_{k,l\in\mathbb{Z}^{n}}|a_{k,l}|^{2},~~\forall~\{a_{k,l}\}\in \ell^{2}(\mathbb{Z}^{2n})\numberthis.
	\end{equation*}
	Thus
	\begin{align*}\label{rif}
		A\int_{\mathbb{T}^{n}}\int_{\mathbb{T}^{n}}|p(\c,\d)|^{2}\,d\d d\c\leq 
		\int_{\mathbb{T}^{n}}\int_{\mathbb{T}^{n}}|p(\c,\d)|^{2}\left[\p,\p\right](\c,\d)\,d\d d\c\leq B\int_{\mathbb{T}^{n}}\int_{\mathbb{T}^{n}}|p(\c,\d)|^{2}\,d\d d\c,\numberthis
	\end{align*}
	for any $p\in L^{2}(\T\times \T).$ Let $M:=\{(\c,\d)\in \mathbb{T}^{n}\times \T : \left[\p,\p\right](\c,\d)>B \}$ be a measurable subset of $\mathbb{T}^{n}\times \T.$ By taking $p=\chi_{M}$ in R.H.S of \eqref{rif}, we get 
	\begin{equation*}
		\iint\limits_{M}(\left[\p,\p\right](\c,\d)-B)\,d\d d\c\leq 0,
	\end{equation*}
	which implies that $\mu(M)=0,$ where $\mu$ is the Lebesgue measure $d\c\, d\d~\text{on}~\T\times \T.$ Similarly, we can show that $\mu\{(\c,\d)\in \mathbb{T}^{n}\times \T : \left[\p,\p\right](\c,\d)<A\}=0$. Thus
	\[0<A\leq\left[\p,\p\right](\c,\d)\leq B<\infty~\text{a.e}~\c,\d\in\T.\]
	Conversely, assume that \eqref{rieszequ} holds. Let $\{a_{k,l}\}\in C_{00}(\mathbb{Z}^{2n}).$ Then by using \eqref{reisz1}, we get 
	\begin{align*}\label{ronly}
		A\int_{\mathbb{T}^{n}}\int_{\mathbb{T}^{n}}\bigg|\sum\limits_{k,l\in\mathbb{Z}^{n}}a_{k,l}e^{2\pi i(k\cdot \c+l\cdot \d)}e^{\pi ik\cdot l}\bigg|^{2}\,d\d d\c&\leq \bigg\|\sum\limits_{k,l\in\mathbb{Z}^{n}}a_{k,l}\t \phi\bigg\|^{2}_{L^{2}(\mathbb{R}^{2n})}\\
		&\leq B\int_{\mathbb{T}^{n}}\int_{\mathbb{T}^{n}}\bigg|\sum\limits_{k,l\in\mathbb{Z}^{n}}a_{k,l}e^{2\pi i(k\cdot \c+l\cdot \d)}e^{\pi ik\cdot l}\bigg|^{2}\,d\d d\c.\numberthis
	\end{align*}
	Then the result follows by using \eqref{reisz21}.
\end{proof}
We illustrate the theorem with an example.
\begin{example}
	Define $\p\in L^{2}(\mathbb{R}^{2})$ by \[K_{\p}(\c,\e)=e^{\c\e}\chi_{[0,1)}(\c)\chi_{[0,1)}(\e),\hspace{5mm}\c,\e \in\mathbb{R}.\] 
	Let $\p_{2}=\p\times \p,$ where $\times$ is a twisted convolution. Now, we aim to show that $\{\t\p_{2} : k,l\in \mathbb{Z}\}$ is a Riesz sequence, by using Theorem \ref{riesz}. We know that the kernel of the Weyl transform of $\p_{2}$ is given by $K_{\p_{2}}(\c,\e)=\int_{\mathbb{R}}K_{\p}(\c,y)K_{\p}(y,\e)\,dy.$ Now, for $\c,\d\in \mathbb{T}$ and $\e\in \mathbb{R}$, we get
	\begin{align*}
		Z_{W}\p(\c,\d,\e)&=\sum\limits_{m\in \mathbb{Z}}K_{\p}(\c+m,\e)e^{-2\pi im\d}\\
		&=K_{\p}(\c,\e)\\
		&=e^{\c\e}\chi_{[0,1)}(\e)
	\end{align*}
	and 
	\begin{align*}
		Z_{W}\p_{2}(\c,\d,\e)&=\sum\limits_{m\in \mathbb{Z}}K_{\p_{2}}(\c+m,\e)e^{-2\pi im\d}\\
		&=\sum\limits_{m\in \mathbb{Z}}\bigg(\int_{\mathbb{R}}K_{\p}(\c+m,y)K_{\p}(y,\e)\,dy\bigg)e^{-2\pi im\c}\\
		&=\int_{\mathbb{R}}\bigg(\sum\limits_{m\in \mathbb{Z}}K_{\p}(\c+m,y)e^{-2\pi im\c}\bigg)K_{\p}(y,\e)\,dy\\
		&=\int_{\mathbb{R}}Z_{W}\p(\c,\d,y)K_{\p}(y,\e)\,dy\\
		&=\int_{\mathbb{R}}e^{\c y}\chi_{[0,1)}(y)e^{y\e}\chi_{[0,1)}(y)\chi_{[0,1)}(\e)\,dy\\
		&=\chi_{[0,1)}(\e)\int_{0}^{1}e^{(\c+\e)y}\,dy.
	\end{align*}
	Further,
	\begin{align*}
		\int_{\mathbb{R}}|Z_{W}\p_{2}(\c,\d,\e)|^{2}\,d\e
		&=\int_{\mathbb{R}}\bigg|\chi_{[0,1)}(\e)\int_{0}^{1}e^{(\c+\e)y}\,dy\bigg|^{2}\,d\e\\
		&=\int_{0}^{1}\int_{0}^{1}e^{(\c+\e)y}\,dyd\e.
	\end{align*}
	Since $1\leq e^{(\c+\e)y}<e^{2},~\text{for all}~\c,\e,y\in [0,1)$,  we have
	\begin{align*}
		1\leq \int_{\mathbb{R}}|Z_{W}\p_{2}(\c,\d,\e)|^{2}\,d\e \leq e^{2},
	\end{align*}
	for all $\c,\d\in \mathbb{T}$.
	Therefore the collection $\{\t \p_{2} : k,l\in \mathbb{Z}\}$ forms a Riesz basis for $V^{t}(\p\times \p).$
\end{example}

\section{Canonical biorthogonal function and orthogonalization}
 The following theorem gives an equivalent condition for the system $\{\t \p : k,l\in\Z\}$ to be an orthonormal system.
\begin{theorem}\label{ortho}
	Let $\p\in\l$. Then $\{\t \p : k,l\in\Z\}$ is an orthonormal system if and only if $\left[\p,\p\right](\c,\d)=1~\text{a.e}~\c,\d\in\T.$
\end{theorem}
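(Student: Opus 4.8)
The plan is to reduce the claim to a computation of the Fourier coefficients of the single $L^{1}$ function $[\p,\p]$ on $\T\times\T$; recall that the estimate established just before this theorem shows $[\p,\p]\in L^{1}(\T\times\T)$, with $\|[\p,\p]\|_{L^{1}}=\|\p\|_{\l}^{2}$.

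First I would compute the Gram entries $\langle \t\p,\,T^{t}_{(k',l')}\p\rangle_{\l}$ for $k,l,k',l'\in\Z$. Using that $Z_{W}$ is unitary, that $Z_{W}\t\p=e^{2\pi i(k\cdot\c+l\cdot\d)}e^{\pi ik\cdot l}Z_{W}\p=E_{k,l}\,Z_{W}\p$ by Proposition \ref{zaktwistprop}, and the definition $[\p,\p](\c,\d)=\int_{\mathbb{R}^{n}}|Z_{W}\p(\c,\d,\e)|^{2}\,d\e$, one obtains
\[
\langle \t\p,\,T^{t}_{(k',l')}\p\rangle_{\l}
=\int_{\T}\int_{\T}E_{k,l}(\c,\d)\,\overline{E_{k',l'}(\c,\d)}\,[\p,\p](\c,\d)\,d\d\,d\c .
\]
(Equivalently, one may apply \eqref{bracket} with $\t\p$ in the first slot and then \eqref{bracketptwist}, using that $e^{-2\pi ik\cdot l}=1$ for $k,l\in\Z$, so that $[\t\p,\p]=E_{k,l}[\p,\p]$.) Since
\[
E_{k,l}(\c,\d)\,\overline{E_{k',l'}(\c,\d)}=e^{\pi i(k\cdot l-k'\cdot l')}\,e^{2\pi i((k-k')\cdot\c+(l-l')\cdot\d)}
\]
is a constant of modulus one times the ordinary trigonometric character indexed by $(k-k',l-l')$, the Gram entry equals that constant times the $(k-k',l-l')$-th Fourier coefficient of $[\p,\p]$.

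Next I would read off the equivalence. By definition, $\{\t\p : k,l\in\Z\}$ is an orthonormal system if and only if $\langle \t\p,\,T^{t}_{(k',l')}\p\rangle_{\l}=\delta_{(k,l),(k',l')}$ for all $(k,l),(k',l')\in\mathbb{Z}^{2n}$. The case $(k,l)=(k',l')$ forces the zeroth Fourier coefficient $\int_{\T}\int_{\T}[\p,\p]$ to equal $1$; and as $(k,l)$ and $(k',l')$ range over all distinct pairs, the differences $(k-k',l-l')$ run through every nonzero point of $\mathbb{Z}^{2n}$, so all remaining Fourier coefficients of $[\p,\p]$ must vanish (the unimodular prefactor is immaterial once one imposes that the entry be zero). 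Because $[\p,\p]\in L^{1}(\T\times\T)$ and an integrable function on $\T\times\T$ is uniquely determined by its Fourier coefficients, this is precisely the statement $[\p,\p]=1$ a.e. Conversely, if $[\p,\p]\equiv 1$ then its $(a,b)$-th Fourier coefficient is $\delta_{(a,b),(0,0)}$, and substituting this into the displayed identity yields $\langle \t\p,\,T^{t}_{(k',l')}\p\rangle_{\l}=\delta_{(k,l),(k',l')}$, so the system is orthonormal.

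The argument is essentially routine; the only point that requires care is the bookkeeping of the phases $e^{\pi ik\cdot l}$ carried by the modified orthonormal system $\{E_{k,l}\}$ of Remark \ref{plancherel theorem} — in particular the small observation that $e^{-2\pi ik\cdot l}=1$ for $k,l\in\Z$, which is exactly what makes the identity $[\t\p,\p]=E_{k,l}[\p,\p]$, and hence the whole reduction, clean — together with the invocation of $[\p,\p]\in L^{1}(\T\times\T)$ so that uniqueness of Fourier coefficients applies.
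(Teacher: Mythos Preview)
Your proof is correct and follows essentially the same route as the paper: both reduce orthonormality of $\{\t\p\}$ to the vanishing of all nonzero Fourier coefficients of $[\p,\p]$ on $\T\times\T$, and then invoke Fourier uniqueness. The only cosmetic difference is that the paper first uses the composition rule \eqref{comptsttrns} to reduce $\langle\t\p,T^{t}_{(k',l')}\p\rangle$ to $\langle\p,T^{t}_{(k-k',l-l')}\p\rangle$ (up to a unimodular factor) before applying \eqref{bracket}, whereas you compute the full Gram entry directly via $Z_{W}$; and the paper appeals to Remark~\ref{plancherel theorem} for the uniqueness step while you invoke $L^{1}$ Fourier uniqueness explicitly.
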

\begin{proof}
	From \eqref{comptsttrns} and \eqref{bracket}, we observe that 
	\[\langle\t\p,T^{t}_{(k^{'},l^{'})}\p\rangle_{\l}=\delta_{(k,l),(k^{'},l^{'})}\hspace{5mm}\text{for}~k,k^{'},l,l^{'}\in\Z,\] if and only if 
	\[\langle\p,T^{t}_{(k,l)}\p\rangle_{\l}=\delta_{(k,l),(0,0)}\hspace{5mm}\text{for}~k,l\in\Z,\] if and only if
	\begin{align*}
		\langle\p,T^{t}_{(k,l)}\p\rangle_{\l}&=\int_{\mathbb{T}^{n}}\int_{\mathbb{T}^{n}}\left[\p,\p\right](\c,\d)e^{-2\pi i(k\cdot \c+l\cdot \d)}e^{-\pi ik\cdot l}d\d d\c.
	\end{align*}
Thus if $\{\t \p : k,l\in\Z\}$ is an orthonormal system, then it follows from Remark \ref{plancherel theorem} that
	\[\left[\p,\p\right](\c,\d)=1,\hspace{5mm}~\text{for a.e}~\c,\d\in \T.\]
Converse is obtained by retracing the steps.
\end{proof}
\begin{theorem}
	Let $\p\in \l.$ Then there exists a canonical biorthogonal function $\tilde{\p}$ to $\p$ such that $\tilde{\p}\in V^{t}(\p)$ if and only if $\frac{1}{\left[\p,\p\right]}\in L^{1}(\T\times \T).$ 
\end{theorem}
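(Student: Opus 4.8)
The plan is to transfer the problem to the model space $L^{2}(\T\times\T;[\p,\p])$ via the isometric isomorphism of Proposition~\ref{propo}, exactly as in the proofs of the frame and Riesz theorems. By the definition of the canonical biorthogonal function, I need $\tilde{\p}\in V^{t}(\p)$ with $\langle T^{t}_{(k,l)}\p,\tilde{\p}\rangle_{\l}=\delta_{k,0}\delta_{l,0}$ for all $k,l\in\Z$. Using part (i) of the bracket Proposition (together with part (ii) to move the twisted translate onto the bracket), this is equivalent to
\[
\int_{\T}\int_{\T}\,[\p,\tilde{\p}](\c,\d)\,e^{-2\pi i(k\cdot\c+l\cdot\d)}e^{-\pi ik\cdot l}\,d\d\,d\c=\delta_{k,0}\delta_{l,0},
\]
which by the Plancherel theorem of Remark~\ref{plancherel theorem} (with the basis $\{E_{k,l}\}$) says precisely that $[\p,\tilde{\p}](\c,\d)=1$ for a.e. $(\c,\d)\in\T\times\T$.

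For the forward direction, write $Z_{W}\tilde{\p}=r\,Z_{W}\p$ with $r\in L^{2}(\T\times\T;[\p,\p])$ as in Proposition~\ref{propo}. Then
\[
[\p,\tilde{\p}](\c,\d)=\int_{\mathbb{R}^{n}}Z_{W}\p\,\overline{r\,Z_{W}\p}\,d\e=\overline{r(\c,\d)}\,[\p,\p](\c,\d),
\]
so the biorthogonality condition forces $\overline{r(\c,\d)}\,[\p,\p](\c,\d)=1$ a.e.; in particular $[\p,\p](\c,\d)\neq 0$ a.e., so $\Omega_{\p}$ is (up to null sets) all of $\T\times\T$, and $r(\c,\d)=1/\overline{[\p,\p](\c,\d)}=1/[\p,\p](\c,\d)$ since the bracket is real-valued. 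The membership $r\in L^{2}(\T\times\T;[\p,\p])$ then reads $\int\int \frac{1}{[\p,\p]^{2}}\,[\p,\p]\,d\d\,d\c=\int\int\frac{1}{[\p,\p]}\,d\d\,d\c<\infty$, i.e. $\frac{1}{[\p,\p]}\in L^{1}(\T\times\T)$.

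Conversely, if $\frac{1}{[\p,\p]}\in L^{1}(\T\times\T)$ then necessarily $[\p,\p]>0$ a.e., so I may define $r:=1/[\p,\p]$; the computation just displayed shows $\|r\|^{2}_{L^{2}(\T\times\T;[\p,\p])}=\|\frac{1}{[\p,\p]}\|_{L^{1}}<\infty$, so $r$ lies in the model space and hence, via the isomorphism of Proposition~\ref{propo}, determines a function $\tilde{\p}\in V^{t}(\p)$ with $Z_{W}\tilde{\p}=r\,Z_{W}\p$. Retracing the identity $[\p,\tilde{\p}]=\overline{r}\,[\p,\p]=1$ and then part~(i) of the bracket Proposition gives $\langle T^{t}_{(k,l)}\p,\tilde{\p}\rangle=\delta_{k,0}\delta_{l,0}$, so $\tilde{\p}$ is the canonical biorthogonal function to $\p$. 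I expect the only subtle point to be the bookkeeping around $\Omega_{\p}$ — namely arguing cleanly that the existence of \emph{any} biorthogonal $\tilde{\p}\in V^{t}(\p)$ already forces $[\p,\p]\neq 0$ a.e. on all of $\T\times\T$ (otherwise $[\p,\tilde{\p}]$ would vanish on a set of positive measure and could not equal $1$ there), together with the harmless observations that $[\p,\p]$ is real-valued and that the $e^{-\pi i k\cdot l}$ twist is exactly absorbed by the modified orthonormal basis $\{E_{k,l}\}$ of Remark~\ref{plancherel theorem}.
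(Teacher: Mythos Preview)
Your proposal is correct and follows essentially the same route as the paper: both transfer to the model space $L^{2}(\T\times\T;[\p,\p])$ via Proposition~\ref{propo}, reduce biorthogonality to the pointwise identity $r(\c,\d)[\p,\p](\c,\d)=1$ a.e.\ using the orthonormal basis $\{E_{k,l}\}$ of Remark~\ref{plancherel theorem}, and then read off $\|r\|^{2}_{L^{2}(\T\times\T;[\p,\p])}=\|1/[\p,\p]\|_{L^{1}}$. The only cosmetic difference is that the paper computes $\langle T^{t}_{(k,l)}\tilde{\p},\p\rangle$ directly via \eqref{zaktwist} (obtaining $r[\p,\p]=1$), whereas you phrase it through the bracket identity $[\p,\tilde{\p}]=\overline{r}\,[\p,\p]$; since $[\p,\p]$ is real this comes to the same thing.
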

\begin{proof}
	Suppose $\tilde{\p}\in V^{t}(\p)$ is a canonical biorthogonal function to $\p$ (biorthogonality is taken with respect to the system of twisted translates). Then, we have 
	\begin{equation*}
		\langle \t\tilde{\p},\p\rangle_{\l}=\delta_{(k,l),(0,0)},\hspace{5mm}k,l\in \Z.
	\end{equation*}
	By Proposition \ref{zaktwistprop} and Proposition \ref{propo}, we get 
	\begin{align*}\label{biortho}
		\delta_{(k,l),(0,0)}&=\langle \t\tilde{\p},\p\rangle_{\l}\\
		&=\int_{\mathbb{T}^{n}}\int_{\mathbb{T}^{n}}\int_{\mathbb{R}^{n}}e^{2\pi i(k\cdot \c+l\cdot \d)}e^{\pi ik\cdot l}Z_{W}\tilde{\p}(\c,\d,\e)\overline{Z_{W}\p(\c,\d,\e)}\,d\e\,d\d\,d\c\\
		&=\int_{\mathbb{T}^{n}}\int_{\mathbb{T}^{n}}\int_{\mathbb{R}^{n}}e^{2\pi i(k\cdot \c+l\cdot \d)}e^{\pi ik\cdot l}r(\c,\d)Z_{W}\p(\c,\d,\e)\overline{Z_{W}\p(\c,\d,\e)}\,d\e\,d\d\,d\c\\
		&=\int_{\mathbb{T}^{n}}\int_{\mathbb{T}^{n}}e^{2\pi i(k\cdot \c+l\cdot \d)}e^{\pi ik\cdot l}r(\c,\d)\int_{\mathbb{R}^{n}}Z_{W}\p(\c,\d,\e)\overline{Z_{W}\p(\c,\d,\e)}\,d\e\,d\d\,d\c.\numberthis
	\end{align*}
	Then
	\begin{equation*}
		\delta_{(k,l),(0,0)}=\int_{\mathbb{T}^{n}}\int_{\mathbb{T}^{n}}r(\c,\d)\left[\p,\p\right](\c,\d)e^{2\pi i(k\cdot \c+l\cdot \d)}e^{\pi ik\cdot l}d\d\,d\c,
	\end{equation*}
	by making use of the definition of the bracket $\left[\p,\p\right],$ it follows that 
	\[r(\c,\d)\left[\p,\p\right](\c,\d)=1,\hspace{5mm}\text{for a.e}~\c,\d\in \T,\] using the orthonormal basis $\{e^{2\pi i(k\cdot \c+l\cdot \d)}e^{\pi ik\cdot l} : k,l\in \mathbb{Z}^{n}\}$.
	Thus, 
	\begin{align*}
		\int_{\mathbb{T}^{n}}\int_{\mathbb{T}^{n}}\frac{1}{\left[\p,\p\right](\c,\d)}\,d\d d\c&=\int_{\mathbb{T}^{n}}\int_{\mathbb{T}^{n}}\frac{|r(\c,\d)\left[\p,\p\right](\c,\d)|^{2}}{\left[\p,\p\right](\c,\d)}\,d\d d\c\\
		&=\int_{\mathbb{T}^{n}}\int_{\mathbb{T}^{n}}|r(\c,\d)|^{2}\left[\p,\p\right](\c,\d)\,d\d d\c\\
		&=\|r\|^{2}_{L^{2}(\T\times \T;\left[\p,\p\right](\c,\d))}<\infty,
	\end{align*}
	which proves our assertion. Conversely, assume that $\frac{1}{\left[\p,\p\right]}\in L^{1}(\T\times \T)$. Then  $\frac{1}{\left[\p,\p\right]}\in L^{2}(\T\times\T;\left[\p,\p\right])$. Define a function $\tilde{\p}$ such that $Z_{W}\tilde{\p}(\c,\d,\e)=\frac{1}{\left[\p,\p\right](\c,\d)}Z_{W}\p(\c,\d,\e).$ Then it follows from Proposition \ref{propo}, that $\tilde{\p}\in V^{t}(\p)$. Appealing to \eqref{biortho} and taking $\frac{1}{\left[\p,\p\right](\xi,\d)}$ in the place of $r(\c,\d)$, we get 
	\[\langle \t\tilde{\p},\p\rangle_{\l}=\int_{\mathbb{T}^{n}}\int_{\mathbb{T}^{n}}e^{2\pi i(k\cdot \c+l\cdot \d)}e^{\pi ik\cdot l}d\d\,d\c,\]
	from which we conclude that $\tilde{\p}$ is a canonical biorthogonal function to $\p$.
\end{proof}
In the forthcoming result, we prove that given a Riesz sequence of twisted translates $\{\t \p : k,l\in \mathbb{Z}^{n}\}$, we can find an appropriate function $\tilde{\p}\in V^{t}(\p)$ such that the resulting sequence forms an orthonormal sequence for $V^{t}(\p)$.
\begin{theorem}
	Let $\p\in L^{2}(\mathbb{R}^{2n})$ such that the system $\{\t \p : k,l\in \mathbb{Z}^{n}\}$ forms a Riesz sequence. Then there exists $\tilde{\p}\in V^{t}(\p)$ such that $\{\t \tilde{\p} : k,l\in \mathbb{Z}^{n}\}$ is an orthonormal system for $V^{t}(\p)$ with $V^{t}(\p)=V^{t}(\tilde{\p})$. More explicitly, $\tilde{\p}$ is given by $Z_{W}\tilde{\p}(\c,\d,\e)=\frac{1}{\sqrt{\left[\p,\p\right](\c,\d)}}Z_{W}\p(\c,\d,\e).$   
\end{theorem}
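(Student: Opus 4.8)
The plan is to define $\tilde{\p}$ via its Weyl-Zak transform by the formula $Z_{W}\tilde{\p}(\c,\d,\e)=\frac{1}{\sqrt{\left[\p,\p\right](\c,\d)}}Z_{W}\p(\c,\d,\e)$ and verify three things: that this formula genuinely defines an element of $L^{2}(\mathbb{R}^{2n})$ lying in $V^{t}(\p)$, that the resulting system of twisted translates is orthonormal, and that $V^{t}(\tilde{\p})=V^{t}(\p)$. The key input is Theorem~\ref{riesz}: since $\{\t\p : k,l\in\Z\}$ is a Riesz sequence, we have $0<A\leq\left[\p,\p\right](\c,\d)\leq B<\infty$ for a.e.\ $\c,\d\in\T$, so $\frac{1}{\sqrt{\left[\p,\p\right]}}$ is a bounded measurable function, bounded above by $1/\sqrt{A}$.

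First I would check that $\tilde{\p}\in V^{t}(\p)$ using Proposition~\ref{propo}: I need $r:=\frac{1}{\sqrt{\left[\p,\p\right]}}\in L^{2}(\T\times\T;\left[\p,\p\right])$. This is immediate from $\int_{\T}\int_{\T}\big|\frac{1}{\sqrt{\left[\p,\p\right](\c,\d)}}\big|^{2}\left[\p,\p\right](\c,\d)\,d\d\,d\c=\int_{\T}\int_{\T}1\,d\d\,d\c=1<\infty$, which in passing also shows $\|\tilde\p\|_{\l}=1$. Proposition~\ref{propo} then yields $\tilde{\p}\in V^{t}(\p)$ and that $Z_W\tilde\p$ has the asserted form. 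Next, to see that $\{\t\tilde{\p} : k,l\in\Z\}$ is orthonormal, by Theorem~\ref{ortho} it suffices to compute $\left[\tilde{\p},\tilde{\p}\right](\c,\d)=\int_{\mathbb{R}^{n}}|Z_{W}\tilde{\p}(\c,\d,\e)|^{2}\,d\e=\frac{1}{\left[\p,\p\right](\c,\d)}\int_{\mathbb{R}^{n}}|Z_{W}\p(\c,\d,\e)|^{2}\,d\e=\frac{\left[\p,\p\right](\c,\d)}{\left[\p,\p\right](\c,\d)}=1$ for a.e.\ $\c,\d\in\T$; here the pointwise bounds on $\left[\p,\p\right]$ guarantee we are never dividing by zero.

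Finally, for $V^{t}(\p)=V^{t}(\tilde{\p})$ I would argue with Proposition~\ref{propo} in both directions. The inclusion $V^{t}(\tilde\p)\subseteq V^{t}(\p)$ follows from $\tilde\p\in V^t(\p)$ together with the fact that $V^t(\p)$ is a closed twisted-shift-invariant subspace (being the closed span of the twisted translates of $\p$, it contains every $\t\tilde\p$ and hence their closed span). For the reverse inclusion, observe that $Z_{W}\p(\c,\d,\e)=\sqrt{\left[\p,\p\right](\c,\d)}\,Z_{W}\tilde{\p}(\c,\d,\e)$, and since $\sqrt{\left[\p,\p\right]}$ is bounded (by $\sqrt{B}$) it belongs to $L^{\infty}(\T\times\T)\subseteq L^{2}(\T\times\T;\left[\tilde\p,\tilde\p\right])=L^2(\T\times\T)$, the last identification using $\left[\tilde\p,\tilde\p\right]\equiv 1$; so Proposition~\ref{propo} (applied with generator $\tilde\p$) gives $\p\in V^{t}(\tilde{\p})$, whence $V^{t}(\p)\subseteq V^{t}(\tilde{\p})$. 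I do not expect a serious obstacle here; the only point requiring a little care is making sure the membership $\sqrt{\left[\p,\p\right]}\in L^{2}(\T\times\T;\left[\tilde\p,\tilde\p\right])$ is phrased against the correct weighted space, which is why computing $\left[\tilde\p,\tilde\p\right]\equiv 1$ first is essential.
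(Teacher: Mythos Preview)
Your proposal is correct and follows essentially the same route as the paper's own proof: invoke Theorem~\ref{riesz} for the bounds on $[\p,\p]$, define $\tilde\p$ via the Weyl--Zak transform, use Proposition~\ref{propo} to place $\tilde\p\in V^t(\p)$, compute $[\tilde\p,\tilde\p]\equiv 1$ and apply Theorem~\ref{ortho}, and finally run Proposition~\ref{propo} in both directions with the multipliers $1/\sqrt{[\p,\p]}$ and $\sqrt{[\p,\p]}$ to obtain $V^t(\p)=V^t(\tilde\p)$. You are in fact slightly more careful than the paper in the reverse inclusion, correctly phrasing the membership of $\sqrt{[\p,\p]}$ in $L^{2}(\T\times\T;[\tilde\p,\tilde\p])=L^{2}(\T\times\T)$ rather than in the $[\p,\p]$-weighted space.
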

\begin{proof}
	Since $\{\t \p : k,l\in \mathbb{Z}^{n}\}$ is a Riesz sequence, by Theorem \ref{riesz} there exist $A,B>0$ such that $A\leq \left[\p,\p\right](\c,\d)\leq B$, for $a.e.~\c,\d \in \mathbb{T}^{n}$. We define $\tilde{\p}\in L^{2}(\mathbb{R}^{2n})$ by
	\begin{equation*}
		Z_{W}\tilde{\p}(\xi,\d,\eta):=\frac{Z_{W}\p(\xi,\d,\eta)}{\sqrt{\left[\p,\p\right](\c,\d)}},
	\end{equation*}
	for $\eta\in\mathbb{R}^n$ and for $a.e.\ \xi,\d\in \T$. Further,
	\begin{align*}
		\left[\tilde{\p},\tilde{\p}\right](\c,\d)&=\int_{\mathbb{R}^{n}}|Z_{W}\tilde{\p}(\c,\d,\e)|^{2}\,d\e\\
		&=\int_{\mathbb{R}^{n}}\bigg|\frac{Z_{W}\p(\c,\d,\e)}{\sqrt{\left[\p,\p\right](\c,\d)}}\bigg|^{2}\,d\e\\
		&=\frac{1}{\left[\p,\p\right](\c,\d)}\int_{\mathbb{R}^{n}}|Z_{W}\p(\c,\d,\e)|^{2}\,d\e\\
		&=\frac{1}{\left[\p,\p\right](\c,\d)}\left[\p,\p\right](\c,\d)\\
		&=1,
	\end{align*}
	for $a.e.\ \xi,\d \in \T$. Hence, by Theorem \ref{ortho}, the system $\{\t \tilde{\p} : k,l\in \mathbb{Z}^{n}\}$ is an orthonormal system. Now, it remains to prove that $V^{t}(\p)=V^{t}(\tilde{\p})$. Let $r(\c,\d)=\frac{1}{\sqrt{\left[\p,\p\right](\c,\d)}}$. Then $r\in L^{2}(\T\times \T;\left[\p,\p\right])$ and we have \[Z_{W}\tilde{\p}(\c,\e,\e)=r(\c,\d)Z_{W}\p(\c,\d,\e).\] By making use of Proposition \ref{propo}, we get $\tilde{\p}\in V^{t}(\p)$, which implies that $V^{t}(\tilde{\p})\subset V^{t}(\p)$. In order to prove the reverse inclusion, define $r_{1}(\c,\d)=\sqrt{\left[\p,\p\right](\c,\d)}$. Then $r_{1}\in L^{2}(\T\times \T;\left[\p,\p\right])$	as $\left[\p,\p\right](\c,\d)\leq B~\text{a.e}~\c,\d\in \mathbb{T}^{n}$ and \[Z_{W}\p(\c,\e,\e)=r_{1}(\c,\d)Z_{W}\tilde{\p}(\c,\d,\e).\] Then proceeding as before we get $V^{t}(\p)\subset V^{t}(\tilde{\p})$.
\end{proof}
\section{The system of twisted translates as a Schauder basis}
In this section, we take $n=1$ and give the characterization for the system $\{\t\p : k,l\in \mathbb{Z}\}$ to be a Schauder basis for $V^{t}(\p)$, for $\p\in L^{2}(\mathbb{R}^{2})$ in terms of Muckenhoupt $\mathcal{A}_{2}$ weight. We refer to \cite{wheeden} in this connection. Now, we state the following lemma for proving the main theorem. 
\begin{lemma}\label{Gaborequivalence}
	Let $w\in L^{1}(\mathbb{T}\times \mathbb{T})$ be a non negative function. For $M,N\geq 0$, define an operator $T_{M,N}$ on $L^{1}(\mathbb{T}\times \mathbb{T}; w)$ by $T_{M,N}f=\sum\limits_{m=-M}^{M}\sum\limits_{n=-N}^{N}\langle f, e_{m,n}\rangle e_{m,n},$ where $e_{m,n}(\c,\d)=e^{-2\pi i(m \c+n \d)}.$ Then the following statements are equivalent. 
	\begin{itemize}
		\item [(i)] $\sup\limits_{M,N}\|T_{M,N}\|<\infty.$
		\item [(ii)] $w\in \mathcal{A}_{2}(\mathbb{T}\times \mathbb{T}).$
	\end{itemize}
\end{lemma}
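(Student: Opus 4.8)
The plan is to reduce both implications to the one-variable periodic Hunt--Muckenhoupt--Wheeden theorem and then exploit the tensor structure of $T_{M,N}$, which we regard as an operator on $L^{2}(\mathbb{T}\times\mathbb{T};w)$. Two preliminary observations set things up. Since $\{e_{m,n}\}$ is an orthonormal basis of the \emph{unweighted} space $L^{2}(\mathbb{T}\times\mathbb{T})$, the operator $T_{M,N}$ is the rectangular Dirichlet partial sum of the Fourier series, hence it factors as $T_{M,N}=S^{(1)}_{M}\circ S^{(2)}_{N}=S^{(2)}_{N}\circ S^{(1)}_{M}$, where $S^{(1)}_{M}$ and $S^{(2)}_{N}$ are the one-dimensional Dirichlet partial sums in the first and second variable, acting on functions on $\mathbb{T}\times\mathbb{T}$ by freezing the other variable. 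Also, the trigonometric polynomials are dense in $L^{2}(\mathbb{T}\times\mathbb{T};w)$ because $w\in L^{1}$ (Lusin's theorem together with Fej\'er's theorem). The one-variable input from \cite{wheeden} that we use is: for non-negative $v\in L^{1}(\mathbb{T})$ one has $\sup_{M}\|S_{M}\|_{L^{2}(\mathbb{T};v)\to L^{2}(\mathbb{T};v)}<\infty$ if and only if $v\in\mathcal{A}_{2}(\mathbb{T})$, with each of the two quantities controlling the other (this follows from the boundedness of the periodic conjugate function on $L^{2}(\mathbb{T};v)$, since $S_{M}$ is a linear combination of conjugate functions pre- and post-composed with unimodular modulations, which are isometries of $L^{2}(\mathbb{T};v)$ for every $v$).

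For $(ii)\Rightarrow(i)$ assume $w\in\mathcal{A}_{2}(\mathbb{T}\times\mathbb{T})$. By the characterisation of product $\mathcal{A}_{2}$ weights recalled before the lemma there is $C>0$ such that $w(\cdot,y)\in\mathcal{A}_{2}(\mathbb{T})$ and $w(x,\cdot)\in\mathcal{A}_{2}(\mathbb{T})$ with constant at most $C$ for a.e.\ $y$ and a.e.\ $x$ respectively. By the one-variable theorem, $\sup_{M}\|S_{M}\|_{L^{2}(\mathbb{T};w(\cdot,y))}\le C'=C'(C)$ for a.e.\ $y$; integrating the corresponding one-variable inequality over the second variable (Tonelli) gives $\|S^{(1)}_{M}f\|_{L^{2}(\mathbb{T}\times\mathbb{T};w)}\le C'\|f\|_{L^{2}(\mathbb{T}\times\mathbb{T};w)}$ for every $M$, and symmetrically $\|S^{(2)}_{N}\|\le C'$ for every $N$. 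Composing, $\|T_{M,N}\|\le\|S^{(1)}_{M}\|\,\|S^{(2)}_{N}\|\le (C')^{2}$, uniformly in $M,N$.

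For $(i)\Rightarrow(ii)$ put $K:=\sup_{M,N}\|T_{M,N}\|<\infty$. If $P$ is a trigonometric polynomial whose degree in the second variable does not exceed $N_{0}$, then $S^{(2)}_{N}P=P$ for every $N\ge N_{0}$, so $T_{M,N}P=S^{(1)}_{M}P$ and hence $\|S^{(1)}_{M}P\|_{L^{2}(w)}\le K\|P\|_{L^{2}(w)}$; by density of the trigonometric polynomials, $S^{(1)}_{M}$ extends to a bounded operator on $L^{2}(\mathbb{T}\times\mathbb{T};w)$ with norm at most $K$, uniformly in $M$, and symmetrically $\sup_{N}\|S^{(2)}_{N}\|\le K$. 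It remains to pass from this bound on $\mathbb{T}\times\mathbb{T}$ to a slicewise bound on $\mathbb{T}$: testing the bounded operator $S^{(1)}_{M}$ against functions $g(x)\chi_{J}(y)$ with $J\subset\mathbb{T}$ an interval gives $\sup_{M}\|S_{M}\|_{L^{2}(\mathbb{T};w_{J})}\le K$, where $w_{J}(x):=\tfrac{1}{|J|}\int_{J}w(x,y)\,dy$, so $w_{J}\in\mathcal{A}_{2}(\mathbb{T})$ with constant at most $C''=C''(K)$ for every interval $J$; letting $J$ shrink to a point and invoking Fatou's lemma together with Lebesgue differentiation in $y$ yields $w(\cdot,y)\in\mathcal{A}_{2}(\mathbb{T})$ with constant at most $C''$ for a.e.\ $y$, and symmetrically $w(x,\cdot)\in\mathcal{A}_{2}(\mathbb{T})$ with constant at most $C''$ for a.e.\ $x$. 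By the characterisation of product $\mathcal{A}_{2}$ weights this gives $w\in\mathcal{A}_{2}(\mathbb{T}\times\mathbb{T})$.

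The genuine content sits in the cited one-variable theorem and in the slice-extraction step of the converse; the latter is, I expect, the main obstacle to a fully self-contained write-up. One must first promote the a priori possibly unbounded $S^{(1)}_{M}$ to an honest bounded operator on $L^{2}(\mathbb{T}\times\mathbb{T};w)$ via the polynomial-density argument before testing it against tensors $g\otimes\chi_{J}$, and then justify the limit $J\to\{y_{0}\}$ at a.e.\ $y_{0}$. Alternatively the slice step can be phrased through the direct-integral decomposition $L^{2}(\mathbb{T}\times\mathbb{T};w)=\int_{\mathbb{T}}^{\oplus}L^{2}(\mathbb{T};w(\cdot,y))\,dy$, under which $S^{(1)}_{M}$ is decomposable with fibres the one-variable partial sums and $\|S^{(1)}_{M}\|_{L^{2}(\mathbb{T}\times\mathbb{T};w)}=\esssup_{y}\|S_{M}\|_{L^{2}(\mathbb{T};w(\cdot,y))}$; this is cleaner but trades elementary measure theory for direct-integral machinery.
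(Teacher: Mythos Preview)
The paper does not actually supply a proof of this lemma: it simply states that ``the proof follows the same steps as in the proof of Theorem~5.6 in \cite{Heil}'' and moves on. So there is no in-house argument to compare yours against line by line; the result is borrowed wholesale from Heil's treatment of Gabor Schauder bases, where the weight is $|Zg|^{2}$ on $\mathbb{T}^{2}$ and the partial-sum operators are exactly the rectangular Dirichlet sums you consider.

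Your argument is correct and is, in outline, the standard route (and almost certainly the one taken in the cited reference): factor $T_{M,N}=S^{(1)}_{M}S^{(2)}_{N}$, invoke the one-variable Hunt--Muckenhoupt--Wheeden theorem slicewise, and use that the product $\mathcal{A}_{2}$ condition is equivalent to uniform one-variable $\mathcal{A}_{2}$ in each slice (which the paper itself records just before the lemma). The direction $(ii)\Rightarrow(i)$ is clean. For $(i)\Rightarrow(ii)$ your density-plus-testing argument to obtain $\sup_{M}\|S^{(1)}_{M}\|\le K$ is fine; for the slice extraction, your direct-integral formulation is indeed the cleaner of the two options you offer, since $S^{(1)}_{M}$ is decomposable over the $y$-fibres and its norm equals $\esssup_{y}\|S_{M}\|_{L^{2}(\mathbb{T};w(\cdot,y))}$, after which a countable supremum over $M$ and the quantitative one-variable theorem finish the job. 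The averaging route via $w_{J}$ also works (Lebesgue differentiation gives $\frac{1}{|I|}\int_{I}w_{J}\to\frac{1}{|I|}\int_{I}w(\cdot,y_{0})$ for a.e.\ $y_{0}$, and Fatou handles the reciprocal factor), but as you anticipated it is fiddlier to write out. One small correction: the lemma's ``$L^{1}(\mathbb{T}\times\mathbb{T};w)$'' is evidently a typo for $L^{2}$; you were right to read it that way.
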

\begin{proof}
	The proof follows the same steps as in the proof of the Theorem 5.6 in \cite{Heil}.
\end{proof}
\begin{theorem}
	Let $\p\in L^{2}(\mathbb{R}^{2}).$ Then the system $\{\t\p : k,l\in \mathbb{Z}\}$ is a Schauder basis for $V^{t}(\p)$ if and only if $\left[\p,\p\right]\in \mathcal{A}_{2}(\mathbb{T}\times \mathbb{T}).$
\end{theorem}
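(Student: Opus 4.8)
The plan is to transfer the Schauder-basis question for the twisted translates into a statement about the uniform boundedness of partial-sum operators on a weighted $L^{2}$ space, and then invoke Lemma~\ref{Gaborequivalence}. First I would use Proposition~\ref{propo}, which identifies $V^{t}(\p)$ isometrically with $L^{2}(\mathbb{T}\times\mathbb{T};[\p,\p])$ via $f\mapsto r_{f}$, sending $\t\p$ to the exponential $E_{k,l}(\c,\d)=e^{2\pi i(k\c+l\d)}e^{\pi ikl}$. Since $\{E_{k,l}\}$ is, by Remark~\ref{plancherel theorem}, the relevant orthonormal basis of $L^{2}(\mathbb{T}\times\mathbb{T})$ (ordinary Lebesgue measure), the system $\{\t\p\}$ being a Schauder basis for $V^{t}(\p)$ is equivalent to $\{E_{k,l}\}$ being a Schauder basis for $L^{2}(\mathbb{T}\times\mathbb{T};[\p,\p])$. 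The unimodular factor $e^{\pi ikl}$ is harmless: it does not affect the span of any initial block $|k|\le M,|l|\le N$, so the associated partial-sum operators coincide with those built from the plain exponentials $e^{2\pi i(m\c+n\d)}$ appearing in Lemma~\ref{Gaborequivalence}.

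Next I would pin down the candidate biorthogonal system. When $1/[\p,\p]\in L^{1}$ (which holds whenever $[\p,\p]\in\mathcal{A}_{2}$, by the $\mathcal{A}_{2}$ condition with $w$ and $1/w$ swapped), the earlier biorthogonality theorem produces $\tilde\p\in V^{t}(\p)$ with $Z_{W}\tilde\p=\frac{1}{[\p,\p]}Z_{W}\p$, and $\{\t\tilde\p : k,l\in\mathbb{Z}\}$ is biorthogonal to $\{\t\p\}$; under the isometry this is exactly the biorthogonal system to $\{E_{k,l}\}$ inside $L^{2}(\mathbb{T}\times\mathbb{T};[\p,\p])$, namely $\langle f,\cdot\rangle$ against $\overline{E_{k,l}}/[\p,\p]$, i.e. the ordinary Fourier coefficients of $f$. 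Thus the partial-sum operator $S_{M,N}$ for $\{\t\p\}$, transported to the weighted space, is precisely the operator $T_{M,N}$ of Lemma~\ref{Gaborequivalence} acting on $L^{1}(\mathbb{T}\times\mathbb{T};w)$ with $w=[\p,\p]$ (noting $L^{2}(w)\hookrightarrow L^{1}(w)$ and that $T_{M,N}$ restricted to $L^{2}(w)$ has the same norm behaviour, since the truncated Fourier projection is the same map). By Theorem~\ref{schauderchara}, $\{\t\p\}$ is a Schauder basis iff $\sup_{M,N}\|S_{M,N}\|<\infty$, which via the isometry is $\sup_{M,N}\|T_{M,N}\|<\infty$, which by Lemma~\ref{Gaborequivalence} is equivalent to $[\p,\p]\in\mathcal{A}_{2}(\mathbb{T}\times\mathbb{T})$.

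For the converse direction one must be slightly careful: a priori, if $\{\t\p\}$ is a Schauder basis one knows a biorthogonal sequence exists and the partial sums are uniformly bounded, but one should check it is this $\tilde\p$ (equivalently that $1/[\p,\p]\in L^{1}$, so that the weighted space is genuinely the right model). A clean way is: a Schauder basis of exponentials in $L^{2}(w)$ forces $w>0$ a.e. (otherwise completeness fails on the zero set, or equivalently $[\p,\p](\c,\d)=0$ on a positive-measure set would make some $\t\p$ redundant and destroy uniqueness of coefficients), and then the biorthogonal functionals must be the Fourier coefficients divided by $w$, which are bounded on $L^{2}(w)$ only if $1/w\in L^{1}$; this puts us back in the situation of the previous paragraph and Lemma~\ref{Gaborequivalence} applies. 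The main obstacle I anticipate is exactly this bookkeeping in the converse — ruling out degenerate $[\p,\p]$ and confirming that the abstractly-given biorthogonal system must coincide with $\{\t\tilde\p\}$ — together with the minor technical point of matching the $L^{1}(w)$-formulation of Lemma~\ref{Gaborequivalence} with the $L^{2}(w)$-formulation coming from $V^{t}(\p)$; both are routine but need to be stated carefully so the equivalence $\sup\|S_{M,N}\|<\infty \iff [\p,\p]\in\mathcal{A}_{2}$ is airtight.
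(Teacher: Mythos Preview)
Your proposal is correct and follows essentially the same route as the paper: transfer via Proposition~\ref{propo} to $L^{2}(\mathbb{T}\times\mathbb{T};[\p,\p])$, identify the partial-sum operators for $\{E_{k,l}\}$ with the operators $T_{M,N}$ of Lemma~\ref{Gaborequivalence}, and combine Theorem~\ref{schauderchara} with that lemma. The only noteworthy difference is that the paper handles your ``converse bookkeeping'' more directly --- it obtains the biorthogonal system $\{F_{m,n}\}$ abstractly via Riesz representation and then shows $\overline{F_{m,n}}\,[\p,\p]=e_{m,n}e^{-\pi imn}$ by computing Fourier coefficients, which yields $S_{M,N}=T_{M,N}$ without first having to argue separately that $[\p,\p]>0$ a.e.\ or that $1/[\p,\p]\in L^{1}$.
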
 
\begin{proof}
	By using the isometric isomorphism in Proposition \ref{propo}, we can show that $\{\t\p : k,l\in \mathbb{Z}\}$ is a Schauder basis for $V^{t}(\p)$ if and only if $\{e_{m,n}e^{-\pi imn} : m,n\in \mathbb{Z}\}$ is a Schauder basis for $L^{2}(\mathbb{T}\times \mathbb{T};\left[\p,\p\right]).$ Assume that $\{e_{m,n}e^{-\pi imn} : m,n\in \mathbb{Z}\}$ is a Schauder basis for $L^{2}(\mathbb{T}\times \mathbb{T};\left[\p,\p\right])$. Then, for each $f\in L^{2}(\mathbb{T}\times \mathbb{T};\left[\p,\p\right])$ there exists unique scalar sequence $\{c_{m,n}(f) : m,n\in \mathbb{Z}\}$ such that 
	\begin{equation*}
		f=\sum\limits_{m,n\in \mathbb{Z}}c_{m,n}(f)e_{m,n}e^{-\pi imn}.
	\end{equation*}
	Further, $f\rightarrow c_{m,n}(f)$ is a bounded linear functional, for every $m,n\in \mathbb{Z}.$ By Riesz representation theorem, there exists a collection $\{F_{m,n} : m,n\in \mathbb{Z}\}\subset\Ll$ such that
	\begin{equation*}
		c_{m,n}(f)=\langle f, F_{m,n}\rangle_{\Ll}.
	\end{equation*}
Notice that 
\begin{equation}\label{expreofbioortho}
	\langle e_{m^{'},n^{'}}e^{-\pi im^{'}n^{'}}, F_{m,n}\rangle_{\Ll}=\delta_{(m,n),(m^{'},n^{'})}.
\end{equation}
Thus $\{F_{m,n} : m,n\in \mathbb{Z}\}$ is biorthogonal to $\{e_{m,n}e^{-\pi imn} : m,n\in \mathbb{Z}\}.$ Now, for $M,N\geq 0,$ the partial sum operator $S_{M,N}$ on $\Ll$, defined by
	\begin{equation}\label{partialsum}
		S_{M,N}F=\sum\limits_{m=-M}^{M}\sum\limits_{n=-N}^{N}\langle F, F_{m,n}\rangle_{\Ll}e_{m,n}e^{-\pi imn}
	\end{equation}
	is uniformly bounded, by using Theorem \ref{schauderchara}. That is, 
	\begin{equation}\label{smnfinite}
		\sup\limits_{M,N\geq 0}\|S_{M,N}\|<\infty.
	\end{equation}	
 Thus, by using Cauchy-Schwartz inequality, we have
	\begin{align*}
		\int_{\mathbb{T}}\int_{\mathbb{T}}&|F_{m,n}(\c,\d)\left[\p,\p\right](\c,\d)|\,d\d d\c\\
		&\leq \bigg(\int_{\mathbb{T}}\int_{\mathbb{T}}|F_{m,n}(\c,\d)\sqrt{\left[\p,\p\right](\c,\d)}|^{2}\,d\d d\c\bigg)^{\frac{1}{2}}\bigg(\int_{\mathbb{T}}\int_{\mathbb{T}}|\sqrt{\left[\p,\p\right](\c,\d)}|^{2}\,d\d d\c\bigg)^{\frac{1}{2}}\\
		&=\|F_{m,n}\|_{\Ll}\|\left[\p,\p\right]\|_{L^{1}(\mathbb{T}\times \mathbb{T})}\\
		&<\infty.
	\end{align*}
	Hence $F_{m,n}\left[\p,\p\right]\in L^{1}(\mathbb{T}\times \mathbb{T})$, for each $m,n\in \mathbb{Z}.$ Further, by making use of \eqref{expreofbioortho},  we have
	\begin{align*}
		\int_{\mathbb{T}}\int_{\mathbb{T}}\overline{F_{m,n}(\c,\d)}\left[\p,\p\right](\c,\d)&e^{-2\pi i(m^{'}\c+n^{'}\d)}e^{-\pi im^{'}n^{'}}\,d\d\,d\c\\
		&=\langle e_{m^{'},n^{'}}e^{-\pi im^{'}n^{'}}, F_{m,n}\rangle_{\Ll}\\
		&=\delta_{(m,n),(m^{'},n^{'})}.
	\end{align*}
	Therefore $F_{m,n}\left[\p,\p\right]=e_{m,n}e^{-\pi imn}~\text{a.e}~\c,\d \in\mathbb{T}$, by using Remark \ref{plancherel theorem}. Let $T_{M,N}$ be an operator on $\Ll$ defined by \[T_{M,N}F=\sum\limits_{m=-M}^{M}\sum\limits_{n=-N}^{N}\langle F, e_{m,n}\rangle_{L^{2}(\mathbb{T}\times\mathbb{T})} e_{m,n}.\]
	Now, for any $F\in \Ll$ and for $m,n\in \mathbb{Z}$, we have
	\begin{align*}\label{tmn}
		\langle F, F_{m,n}\rangle_{\Ll}&=\int_{\mathbb{T}}\int_{\mathbb{T}} F(\c,\d)\overline{F_{m,n}(\c,\d)}\left[\p,\p\right](\c,\d)\,d\d d\c\\
		&=\int_{\mathbb{T}}\int_{\mathbb{T}} F(\c,\d)\overline{e_{m,n}e^{-\pi imn}}\,d\d d\c\\
		&=\langle F, e_{m,n}e^{-\pi i mn}\rangle_{L^{2}(\mathbb{T}\times \mathbb{T})}\numberthis.
	\end{align*}
    By substituting \eqref{tmn} in \eqref{partialsum}, we obtain
	\begin{align*}\label{smntmn}
		S_{M,N}F&=\sum\limits_{m=-M}^{M}\sum\limits_{n=-N}^{N}\langle F, F_{m,n}\rangle_{L^{2}(\mathbb{T}\times\mathbb{T})} e_{m,n}e^{-2\pi imn}\\
		&=\sum\limits_{m=-M}^{M}\sum\limits_{n=-N}^{N}\langle F, e_{m,n}e^{-\pi i mn}\rangle_{L^{2}(\mathbb{T}\times \mathbb{T})}e_{m,n}e^{-\pi i mn}\\
		&=\sum\limits_{m=-M}^{M}\sum\limits_{n=-N}^{N}\langle F, e_{m,n}\rangle_{\Ll}e_{m,n}\\
		&=T_{M,N}F\numberthis.
	\end{align*}
	By combining \eqref{smnfinite} and \eqref{smntmn}, we get $\sup\limits_{M,N\geq 0}\|T_{M,N}\|<\infty$. Then the result follows by using Lemma \ref{Gaborequivalence}. Conversely, assume that $\left[\p,\p\right]\in \mathcal{A}_{2}(\mathbb{T}\times \mathbb{T}).$ Then there exists $C>0$ such that for all intervals $I,J\subset \mathbb{R},$ we have \[\bigg(\frac{1}{|I||J|}\int_{J}\int_{I}\left[\p,\p\right](x,y)\,dxdy\bigg)\bigg(\frac{1}{|I||J|}\int_{J}\int_{I}\frac{1}{\left[\p,\p\right](x,y)}\,dxdy\bigg)\leq C,\]
	which implies that $\frac{1}{\left[\p,\p\right]}\in L^{1}(\mathbb{T}\times \mathbb{T}).$
	Further, by Lemma \ref{Gaborequivalence}, we have $\sup\limits_{M,N\geq 0}\|T_{M,N}\|<\infty.$ Define $F_{m,n}\in \Ll$ by $F_{m,n}\left[\p,\p\right]=e_{m,n}e^{-\pi imn}.$ Since $\{e_{m,n}e^{-\pi imn} : m,n\in \mathbb{Z}\}$ is an orthonormal system in $L^{2}(\mathbb{T}\times \mathbb{T})$, it can be easily shown that 
	\begin{align*}
		\langle F_{m^{'},n^{'}}, e_{m,n}e^{-\pi i mn}\rangle_{\Ll}=\delta_{(m^{'},n^{'}),(m,n)}.
	\end{align*}
	Therefore $\{F_{m,n} : m,n\in \mathbb{Z}\}$ is biorthogonal to $\{e_{m,n}e^{-\pi imn} : m,n\in \mathbb{Z}\}.$ By the given hypothesis, $\sup\limits_{M,N\geq 0}\|T_{M,N}\|<\infty$. Then it follows from \eqref{smntmn} that $\sup\limits_{M,N\geq 0}\|S_{M,N}\|<\infty$, from which we conclude that $\{\t\p : k,l\in \mathbb{Z}\}$ is a Schauder basis for $V^{t}(\p)$.
\end{proof}

\section{Some comments on dual integrability and Helson map}
Let $H$ denote the locally compact abelian group $2\mathbb{Z}^{n}\times \Z$ and $\mathcal{U}(\l)$ denote the collection of all uniatry operators on $\l$. Now, define a function
\begin{equation*}
	\Pi_{H} : 2\mathbb{Z}^{n}\times \Z\rightarrow \mathcal{U}(\l)~\text{by}~\Pi((2k,l))=T^{t}_{(2k,l)}.
\end{equation*}
Then $\Pi_{H}$ is a unitary representation of $2\mathbb{Z}^{n}\times \Z$ on $\l.$ We can now define Weyl-Zak transform associated with the representation $\Pi_{H}$ as follows. 
\[Z_{\Pi_{H}}(f)(\c,\d,\e)=\sum\limits_{m\in\Z}K_{f}(\c+\frac{m}{2},\e)e^{-2\pi im\cdot \d},\hspace{5mm} \c\in [0,\frac{1}{2})^{n},\d\in \T,\eta\in \mathbb{R}^{n}.\]  Then it can be easily shown that $Z_{\Pi_{H}}$ is an isometric isomorphism from $\l$ onto $L^{2}([0,\frac{1}{2})^{n}\times \T\times \mathbb{R}^{n}).$
\begin{definition}
	The bracket map corresponding to the representation $\Pi_{H}$ is a function \\$\left[\cdot, \cdot\right]_{\Pi_{H}} : \l\times\l \rightarrow L^{1}([0,\frac{1}{2})^{n}\times \T)$, which is defined by 
	\begin{equation*}
		\left[\p,\psi\right]_{\Pi_{H}}(\c,\d)=\int_{\mathbb{R}^{n}}Z_{\Pi_{H}}\p(\c,\d,\e)\overline{Z_{\Pi_{H}}\psi(\c,\d,\e)}\,d\e.
	\end{equation*}
\end{definition}
Let $\p\in \l$. Now, we look at the image of $T^{t}_{(2k,l)}\p$ under the map $Z_{\Pi_{H}}.$
\begin{align*}\label{zak2kl}
	Z_{\Pi_{H}} T^{t}_{(2k,l)}\p(\c,\d,\e)&=\sum\limits_{m\in\Z}K_{T^{t}_{(2k,l)}\p}(\c+\frac{m}{2},\e)e^{-2\pi im\cdot \d}\\
	&=\sum\limits_{m\in\Z}e^{4\pi i(\c+\frac{m}{2})\cdot k}K_{\p}(\c+\frac{m}{2}+l,\e)e^{-2\pi im\cdot \d}\\
	&=e^{4\pi i\c\cdot k}\sum\limits_{m\in\Z}K_{\p}(\c+\frac{m}{2},\e)e^{-2\pi i(m-2l)\cdot \d}\\
	&=e^{2\pi i\c\cdot 2k}(e^{2\pi il\cdot \d})^{2}Z_{\Pi_{H}}\p(\c,\d,\e)\numberthis.
\end{align*} 
For any $k,l\in \Z,$ 
\begin{align*}\label{bracketpi}
	\langle\p,\Pi_{H}(2k,l)\psi\rangle_{\l}&=\langle\p,T^{t}_{(2k,l)}\psi\rangle_{\l}\\
	&=\langle Z_{\Pi_{H}}\p,Z_{\Pi_{H}} T^{t}_{(2k,l)}\psi\rangle_{L^{2}([0,\frac{1}{2})^{n}\times \T\times \mathbb{R}^{n})}\\
	&=\int_{[0,\frac{1}{2})^{n}}\int_{\mathbb{T}^{n}}e^{-2\pi i\c\cdot 2k}e^{-4\pi il\cdot \d}\bigg(\int_{\mathbb{R}^{n}}Z_{\Pi_{H}}\p(\c,\d,\e)\overline{Z_{\Pi_{H}}\psi(\c,\d,\e)}d\e\bigg)\,d\d\,d\c\\
	&=\int_{[0,\frac{1}{2})^{n}}\int_{\mathbb{T}^{n}}\left[\p,\psi\right]_{\Pi}(\c,\d)e^{-2\pi i\c\cdot 2k}e^{-2\pi il\cdot \d}e^{-2\pi il\cdot \d}\,d\d\,d\c,\numberthis
\end{align*}
by using \eqref{zak2kl}.\\

Define a map $\mathcal{J} : \l\rightarrow L^{2}(\mathbb{R}^{n},L^{2}([0,\frac{1}{2})^{n}\times \T))$ by \[\mathcal{J}f(\e)(\c,\d)=Z_{\Pi_{H}} f(\c,\d,\e).\] Then \eqref{zak2kl} can be written as 
\begin{equation}\label{helsonmap}
	\mathcal{J}[\Pi_{H}(2k,l)f](\e)(\c,\d)=e^{2\pi i\c\cdot 2k}e^{2\pi il\cdot \d}e^{2\pi il\cdot \d}\mathcal{J}[f](\e)(\c,\e).
\end{equation}

Observe that there is a presence of an additional term in the integrand in \eqref{bracketpi} and in \eqref{helsonmap} in comparison to \eqref{dualdef1} and \eqref{helson1} respectively. We shall try to explain the reason behind this. If we consider $G=\mathbb{Z}$, integer translations (unitary representation of $\mathbb{Z}$ on $\mathcal{B}(L^{2}(\mathbb{R}))$) and the representation $\Pi : \mathbb{Z}\mapsto \mathcal{B}(L^{2}(\mathbb{R}))$ defined by $\Pi(k)=T_{k}$ along with the Helson map $\mathcal{J} : L^2(\mathbb{R})\mapsto \ell^{2}(\mathbb{Z},L^{2}(\mathbb{T}))$ defined by $\mathcal{J} f(m)(\xi)=\widehat{f}(\c+m)$, $f\in L^{2}(\mathbb{R}),\c\in \mathbb{T},m\in \mathbb{Z}$, then $$\langle \phi, \Pi(k)\psi \rangle_{L^{2}(\mathbb{R})}=\int_{\mathbb{T}}\bigg(\sum\limits_{m\in \mathbb{Z}}\widehat{\phi}(\c+m)\overline{\widehat{\psi}(\c+m)}\bigg)e^{-2\pi ik\c}\,d\c~\text{and}~\mathcal{J} (\Pi(k)\phi)(m)(\c)=e^{2\pi ik\c}\widehat{\phi}(\c+m).$$ Thus $\Pi$ satisfies (1.1) and (1.2). This is due to the fact that $\widehat{T_{k}f}(\c)=e^{-2\pi ik\c}\widehat{f}(\c)$. However, in the present setting of twisted translates, we have \begin{equation}\label{final}
	K_{\t\phi}(\c,\e)=e^{\pi i(2\c+l)k}K_{\phi}(\c+l,\e),~\text{for}~\phi\in L^{2}(\mathbb{R}^{2n}),
\end{equation} by Lemma \ref{twistker}. We notice that, on the R.H.S $``\c+l"$ appears as the first variable of $K_{\phi}$ instead of simply $``\c"$. This leads to the additional terms in (7.2) and (7.3) in comparison to (1.1) and (1.2). Recall that the function $K_{\phi}$ is the kernel of the Weyl transform of $\phi$ as mentioned in Section 2. Thus \eqref{final} is equivalent to $W(\t \phi)\chi(\c)=e^{\pi i(2\c+l)k}W(\phi)\chi(\c+l)$, for $\chi\in L^{2}(\mathbb{R}^{n})$. Further, the Weyl transform is obtained from $W_{\lambda}(f^{\lambda})=\widehat{f}(\lambda)$, (by taking $\lambda=1$), which is the group Fourier transform of $f$ on $\mathbb{H}^{n}$ that satisfies $\widehat{L_{(2k,l,m)}f}(\lambda)=\pi_{\lambda}(2k,l,m)\widehat{f}(\lambda)$, where $L_{(2k,l,m)}f$ denotes the left translate of $f$ on the Heisenberg group $\mathbb{H}^{n}$ by $(2k,l,m)$. Thus, the additional terms in (7.1) and (7.2) are due to the property of the Schr\"{o}dinger representation on the non-abelian group $\mathbb{H}^{n}$. We believe that these observations pave a way for further investigations in future towards Helson map and dual integrability on a non-abelian locally compact group.  

\section*{Acknowledgement}
We thank Prof. C. Heil for his helpful comments on Lemma \ref{Gaborequivalence}. We thank the referee for meticulously reading the manuscript and giving us valuable comments and suggestions which helped us to improve the presentation of the earlier version of the manuscript to the current version.\\

\bibliographystyle{amsplain}
\bibliography{weylzak}
\end{document}